\numberwithin{equation}{section}
\numberwithin{equation}{section}
\newcommand{\pd}[2]{\frac {\partial #1}{\partial #2}}
\newcommand{\al}{\alpha}
\newcommand{\bb}{\beta}
\newcommand{\la}{\lambda}
\newcommand{\oo}{\omega}
\newcommand{\dd}{\delta}
\newcommand{\Na}{\nabla}
\def\ga{\gamma}
\newcommand{\ee}{\epsilon}
\newcommand{\si}{\sigma}
\newcommand{\te}{\theta}
\newcommand{\beq}{\begin{equation}}
\newcommand{\eeq}{\end{equation}}
\newcommand{\beqs}{\begin{eqnarray*}}
\newcommand{\eeqs}{\end{eqnarray*}}
\newcommand{\beqn}{\begin{eqnarray}}
\newcommand{\eeqn}{\end{eqnarray}}
\newcommand{\beqa}{\begin{array}}
\newcommand{\eeqa}{\end{array}}
\def\td{\tilde}
\def\RR{{\mathbb R}}
\def\ri{\rightarrow}
\def\un{\underline}
\def\no{{\nonumber}}
\def\si{\sigma}
\def\pbp{\sqrt{-1}\partial\bar\partial}
\def\tr{{\rm tr}}
\def\vol{{\rm vol}}
\def\cH{{\mathcal H}}
\def\cK{{\mathcal K}}
\def\Aut{{\rm Aut}}
\newtheorem{prop}{Proposition}[section]
\newtheorem{theo}[prop]{Theorem}
\newtheorem{lem}[prop]{Lemma}
 \def\ExtendSymbol#1#2#3#4#5{\ext@arrow 0099{\arrowfill@#1#2#3}{#4}{#5}}
 \def\ExtendSymbol#1#2#3#4#5{\ext@arrow 0099{\arrowfill@#1#2#3}{#4}{#5}}
\title{Calabi flow with bounded $L^p$ scalar curvature
}
\author{Haozhao Li \footnote{Supported by NSFC grant No. 12071449, No. 12471058,  the CAS Project for Young Scientists
in Basic Research (YSBR-001), and the Fundamental Research Funds
for the Central Universities.},  \quad\quad
Linwei Zhang \quad and \quad
Kai Zheng
\footnote{Supported by NSFC grant No. 12171365 and  No. 12326426.}}
\begin{document}
\bibliographystyle{plain}

\date{}

\maketitle

\begin{abstract}
In this paper, we show that   the Calabi flow can be extended as long as the $L^p$ scalar curvature is uniformly bounded for some $p>n$, and on a compact extremal K\"ahler manifold the  Calabi flow with uniformly bounded $L^p(p>n)$ scalar curvature exists for all time and converges exponentially fast to an extremal K\"ahler metric.
\end{abstract}

\tableofcontents

\section{Introduction}

Let $(M^n, g)$ be a compact K\"ahler manifold of complex dimension $n$. A
family of K\"ahler metrics $\oo_{\varphi(t)}(t\in [0, T])$ in the same K\"ahler class $[\oo_g]$ is called a solution of
Calabi flow, if the K\"ahler potential $\varphi(t)$ satisfies the equation
\beq
\pd {\varphi(t)}t=R(\oo_{\varphi(t)})-\un R, \label{eq000}
\eeq where $R(\oo_{\varphi(t)})$ denotes the scalar curvature of the metric $\oo_{\varphi(t)}$ and $\un R$ denotes the average of the scalar curvature.
The Calabi flow was introduced by Calabi in \cite{[Cal1]} as a decreasing flow of  Calabi energy, and it is expected to be
an
effective tool to find constant scalar curvature metrics in a K\"ahler class. Since the Calabi flow   a fully nonlinear fourth order partial differential
equation, it is difficult to study its behavior by  standard parabolic estimates. In this paper, we will study the long time existence and convergence of Calabi flow under some conditions on scalar curvature.

There are many literatures on Calabi flow. In
Riemann surfaces, the long time behavior and convergence is
completely solved by Chrusciel \cite{[Chru]},    Chen
\cite{[Chen]} and   Struwe \cite{[Stru]} independently by different methods. In general K\"ahler
manifolds of higher dimensions, the short time existence of Calabi
flow was showed by Chen-He \cite{[ChenHe1]}.
In a series of papers \cite{[ChenHe1]}\cite{[ChenHe2]}\cite{[ChenHe3]}\cite{[He4]}\cite{[He5]}, Chen and He studied the long time existence
and convergence under some curvature conditions. Tosatti-Weinkove  \cite{[TW]} showed the long time existence and convergence when the Calabi energy is small, and Szekelyhidi \cite{[Sz]} studied the Calabi flow under the assumption
that the curvature tensor is uniformly bounded and the $K$-energy is proper. Moreover, Streets
\cite{[St1]}\cite{[St2]}  showed the long time existence of a weak solution to the
Calabi flow and Berman-Darvas-Lu \cite{[BDL]} showed the convergence of weak Calabi flow on general K\"ahler manifolds.

A conjecture of X. X. Chen in \cite{[Chen2]} says that the Calabi flow always exists for all time for any initial K\"ahler metrics. There are some results relating this conjecture.
Chen-He's
result in \cite{[ChenHe1]} showed that the solution to the Calabi flow exists as long as the Ricci curvature stays bounded, and  Huang in \cite{[Huang]} showed the Calabi
flow can be extended under some technical conditions on toric manifolds.
 In \cite{[LZ]} Li-Zheng showed the long time existence under the assumptions on the lower boundedness of Ricci curvature, the properness of the $K$-energy, and the $L^p(p>n)$ bound of scalar curvature.
 In \cite{[LWZ]}, Li-Wang-Zheng adapted ideas from Ricci flow in
\cite{[Wang3]} and \cite{[Wang2]} to study the curvature behaviour at the singular time of Calabi flow. A breakthrough was made by Chen-Cheng in \cite{[CC1]} and they showed that the Calabi flow always exists as long as the scalar curvature is bounded.

In \cite{[LWZ]}, Li-Wang-Zheng   proved the convergence of the long time solution of the Calabi flow on a K\"ahler manifold with an extremal K\"ahler metric, when the scalar curvature is uniformly bounded, partially confirms a conjecture of Donaldson in \cite{MR2103718} on the convergence of the Calabi flow. Combined with Chen-Cheng's result \cite{[CC1]} mentioned above, it implies along the Calabi flow, the scalar curvature bound gives both the long time existence in Chen's conjecture and the convergence in Donaldson's conjecture.

In this paper, we follow Chen-Cheng's estimates in \cite{[CC1]} on the scalar curvature equation to study  Calabi flow. We first  show that the scalar curvature type equation has higher order estimates under the bounded $L^p$ condition.

 \begin{theo}\label{theo:main1a} Let $(M, \oo_g)$ be a compact K\"ahler manifold of complex dimension $n\geq 2$. Assume that $(F, \varphi)$ is a smooth solution to the scalar curvature type equation
 \beqn
 (\oo_g+\pbp \varphi)^n&=&e^F \oo_g^n,\quad \sup_M\;\varphi=0,\\
 \Delta_{\varphi}F&=&-f+\tr_{\varphi}\eta,
 \eeqn  where $f$ is a given smooth function and $\eta$ is a given smooth real $(1, 1)$ form. Then for  any given $p>n$ there exists a constant $C$ depending only on $n, p$,
  $\oo_g$, an upper bound of $\int_M\, F\,\oo_{\varphi}^n$,  $\max_M|\eta|_{\oo_g}$ and $\|f\|_{L^p(\oo_{\varphi})}$, such that
\beq
 \frac 1C\oo_g\leq \oo_{\varphi}\leq C \oo_g.
\eeq Therefore, $\|F\|_{W^{2, q}(\oo_g)}$ and $\|\varphi\|_{W^{4, q}(\oo_g)}$ are bounded in terms of $C$ for any finite $q$.

\end{theo}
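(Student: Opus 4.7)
The plan is to follow the strategy of Chen-Cheng \cite{[CC1]}, which establishes this estimate under the stronger assumption $f\in L^\infty$, and to verify that the proof still goes through once $L^\infty$ control on $f$ is replaced by $L^p$ control for $p>n$. The argument has four stages: an $L^\infty$ bound for $F$, a $C^0$ bound for $\varphi$, a second-order estimate equivalent to $C^{-1}\omega_g\le \omega_\varphi\le C\omega_g$, and a standard bootstrap to $W^{2,q}$ and $W^{4,q}$. The exponent $p>n$ is the natural threshold because, on the $2n$-real-dimensional manifold $M$, it is precisely the regime in which $L^p$ right-hand sides control $L^\infty$ solutions of a second-order elliptic equation through Moser iteration or an ABP estimate.

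For the $C^0$ estimates I would first use the entropy hypothesis $\int_M F\,\omega_\varphi^n \le C$, which via Jensen's inequality and $\omega_\varphi^n=e^F\omega_g^n$ gives an $L^q$ bound for $e^F$ against $\omega_g^n$ for some $q>1$. Kolodziej's pluripotential estimate then yields $\|\varphi\|_{L^\infty}\le C$, and in particular a Sobolev inequality with respect to $\omega_\varphi$ whose constants depend only on the data allowed in the theorem. I would then run a Moser iteration on the equation $\Delta_\varphi F = -f+\tr_\varphi \eta$: since $f\in L^p(\omega_\varphi)$ with $p>n$ and $\tr_\varphi \eta$ is controlled by $\max_M|\eta|_{\omega_g}\cdot\tr_\varphi\omega_g$ together with the $L^\infty$ bound on $\varphi$, the iteration closes and gives $\|F\|_{L^\infty}\le C$.

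The main obstacle is the second-order estimate. Following Chen-Cheng I would consider an auxiliary quantity of the form
\[
 u = \log\tr_g\omega_\varphi - A\varphi + B\,e^{-\lambda F}
\]
for well-chosen constants $A,B,\lambda>0$, and compute $\Delta_\varphi u$ via the Chern-Lu inequality together with the equation for $F$. This leads to a differential inequality of the schematic shape
\[
 \Delta_\varphi u \ge c_1\,\tr_g\omega_\varphi - c_2 - c_3|f|,
\]
in which the $|f|$ term is exactly what prevents a pointwise maximum-principle argument from closing. To circumvent this I would apply an Alexandrov-Bakelman-Pucci estimate, or equivalently a De~Giorgi/Moser iteration, to $(u-\sup_M u/2)_+$: the hypothesis $f\in L^p(\omega_\varphi)$ with $p>n$ provides precisely the scaling needed to absorb the bad term and to bound $\sup_M u$. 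Combined with the $L^\infty$ bound on $F$, this yields $\tr_g\omega_\varphi\le C$, and hence the two-sided metric equivalence $C^{-1}\omega_g\le\omega_\varphi\le C\omega_g$.

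Once this is in hand the remainder is routine. The equation $\Delta_\varphi F = -f+\tr_\varphi\eta$ is uniformly elliptic with $L^p$ right-hand side, so the Calder\'on-Zygmund theory gives $F\in W^{2,p}$ and in particular $F\in C^{\alpha}$ (since $2p>2n$). The complex Evans-Krylov theorem applied to the Monge-Amp\`ere equation $(\omega_g+\pbp\varphi)^n=e^F\omega_g^n$ then gives $\varphi\in C^{2,\alpha}$; a standard Schauder bootstrap feeds this back into both equations and lifts the regularity to $F\in W^{2,q}$ and $\varphi\in W^{4,q}$ for every finite $q$, completing the proof.
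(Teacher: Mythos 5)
Your overall architecture ($C^0$ bounds for $F$ and $\varphi$, then the metric equivalence, then bootstrap) matches the paper's, but the two steps you sketch in detail both have genuine gaps. First, the $\|F\|_{C^0}$ bound cannot be obtained by Moser iteration on $\Delta_\varphi F=-f+\tr_\varphi\eta$ as you propose: the term $\tr_\varphi\eta$ is controlled by $\max_M|\eta|_{\oo_g}\cdot\tr_\varphi\oo_g$, and $\tr_\varphi\oo_g$ is \emph{not} controlled by $\|\varphi\|_{C^0}$ --- it is only bounded (even in $L^q$) after the second-order estimate, which itself requires $\|F\|_{C^0}$. This is circular. The actual $C^0$ estimates come from Chen--Cheng's auxiliary Monge--Amp\`ere/$\alpha$-invariant argument (the paper simply cites Lu--Seyyedali, Theorem 1.2 of \cite{[LS]}, for this step).

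Second, and more seriously, your claimed differential inequality $\Delta_\varphi u\ge c_1\tr_g\oo_\varphi-c_2-c_3|f|$ for $u=\log\tr_g\oo_\varphi-A\varphi+Be^{-\lambda F}$ is not obtainable. The Chern--Lu/Aubin--Yau computation produces the term $\Delta_g F/\tr_g\oo_\varphi$, whereas the equation only controls $\Delta_\varphi F$; there is no pointwise conversion between the two, and this is precisely the central difficulty of the whole subject. Chen--Cheng (and this paper) handle it by \emph{integral} estimates: the term $\int e^{-\alpha(F+\lambda\varphi)}\Delta_g F\,v^{p-1}$ is integrated by parts, producing gradient terms in $F$ and $\varphi$ that must then be controlled separately. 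Under the $L^p$ hypothesis on $f$ this forces the paper's specific chain of lemmas: (i) a Sobolev inequality for $\oo_\varphi$ with constant depending only on $\|F\|_{C^0}$ and the class (Guo--Phong--Song--Sturm / Guedj--T\^o), used to get $\|n+\Delta_g\varphi\|_{L^q(\oo_\varphi)}$ for all $q$; (ii) an $L^\kappa$ bound, $\kappa>n$, on $|\Na F|_\varphi^2$ (Lemma \ref{lem:F}, the key new ingredient replacing the pointwise bound on $w=e^{F/2}|\Na F|_\varphi^2$ available when $f\in L^\infty$); (iii) a gradient bound on $\varphi$; and (iv) a final Moser iteration in which the term $\int z^2|\Na F|_\varphi^2$ is absorbed by H\"older using (ii). An ABP or De~Giorgi argument on $(u-\sup u/2)_+$ cannot substitute for this, because the uncontrolled $\Delta_g F$ term is already present before any iteration begins. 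Your final bootstrap paragraph is fine and agrees with the paper.
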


Theorem \ref{theo:main1a} was first proved by Chen-Cheng in \cite{[CC1]} under the assumption that $|f|_{\infty}$ is bounded. In \cite{[Zheng]} and \cite{[Zheng2]} the third author obtained Chen-Cheng's estimates  in the setting of K\"ahler cone metrics.  Recently, Lu-Seyyedali \cite{[LS]} showed   Theorem \ref{theo:main1a} under the assumption that $\|f\|_{L^p(\oo_{\varphi})}$ is bounded for $p\sim n^3.$ In Theorem \ref{theo:main1a} we relax the condition to $p>n$.

The first application of Theorem \ref{theo:main1a} is to study the long time existence of the Calabi flow. The following theorem says that the Calabi flow has long time existence under the assumption that  the $L^p$ scalar curvature is uniformly bounded.

\begin{theo}\label{theo:main1} Let $(M, \oo_g)$ be a compact K\"ahler manifold of complex dimension $n\geq 2$.
 Consider the Calabi flow
 \beqs
 \pd {\varphi}t=R(\oo_{\varphi})-\un R, \quad \forall\; t\in [0, T)
 \eeqs for some $T<\infty.$
If  the scalar curvature satisfies   $$\sup_{t\in [0, T)}\|R(\oo_{\varphi(t)})\|_{L^p(\oo_{\varphi(t)})}<\infty$$   for some $p>n$,  the Calabi flow  can be extended past time $T$.

 \end{theo}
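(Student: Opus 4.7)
The strategy is to identify, at each time $t\in[0,T)$, the K\"ahler potential $\varphi(t)$ with a solution of the scalar curvature type equation of Theorem \ref{theo:main1a}, to verify its three hypotheses uniformly in $t$, and then to upgrade the resulting regularity by a parabolic bootstrap to extend the flow past $T$. Concretely, set $F(t):=\log(\omega_{\varphi(t)}^n/\omega_g^n)$ and replace $\varphi(t)$ by $\varphi(t)-\sup_M\varphi(t)$ (a time-dependent shift that leaves the metric $\omega_\varphi$ and therefore $F$ unchanged) so that $\sup_M\varphi=0$. The tautological $(\omega_g+\pbp\varphi)^n=e^F\omega_g^n$ together with the identity $\Delta_\varphi F=-R(\omega_\varphi)+\tr_\varphi\mathrm{Ric}(\omega_g)$ show that at each $t$ the pair $(F,\varphi)$ satisfies the equation of Theorem \ref{theo:main1a} with $f=R(\omega_\varphi)$ and $\eta=\mathrm{Ric}(\omega_g)$. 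Two of the three hypotheses are then automatic: $\|f\|_{L^p(\omega_\varphi)}$ is bounded by assumption, and $\max_M|\eta|_{\omega_g}$ depends only on the background metric.

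The main analytic step is a uniform upper bound for the entropy $\cH(\varphi(t)):=\int_M F\,\omega_\varphi^n$ on $[0,T)$. The plan is to combine three ingredients: (i) the Mabuchi $K$-energy is monotonically decreasing along the Calabi flow, so $\nu_{\omega_g}(\varphi(t))\leq\nu_{\omega_g}(\varphi(0))$; (ii) the Calabi energy $\int_M(R-\un R)^2\omega_\varphi^n$ is also monotonically decreasing, which bounds the Mabuchi $L^2$-speed of $\varphi(t)$ and hence, on the finite interval $[0,T)$, the Aubin-Yau functional $(I-J)(\varphi(t))$; and (iii) Chen's decomposition
\begin{equation*}
\nu_{\omega_g}(\varphi)=\cH(\varphi)+L(\varphi),
\end{equation*}
in which the remainder $L(\varphi)$ is a sum of mixed Monge-Amp\`ere integrals of $\varphi$ against $\omega_g$, $\omega_\varphi$ and $\mathrm{Ric}(\omega_g)$, controlled by $\|\varphi\|_{L^1(\omega_g^n)}$ and $(I-J)(\varphi)$. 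Under $\sup\varphi=0$ the compactness of the set of $\omega_g$-psh functions with vanishing supremum bounds $\|\varphi\|_{L^1}$; combined with (ii) this bounds $L(\varphi)$ from below, and (i) then yields $\cH(\varphi(t))\leq C$.

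With all three hypotheses of Theorem \ref{theo:main1a} verified uniformly on $[0,T)$, applying it at each $t$ yields a constant $C>0$ such that
\begin{equation*}
C^{-1}\omega_g\leq\omega_{\varphi(t)}\leq C\omega_g,\qquad \|\varphi(t)\|_{W^{4,q}(\omega_g)}\leq C
\end{equation*}
for every finite $q$ and every $t\in[0,T)$; in particular $\varphi(t)$ is uniformly $C^{3,\alpha}$ in space. The Calabi flow is then a uniformly parabolic fourth-order quasilinear equation with $C^{1,\alpha}$ leading coefficients, and a standard parabolic Schauder bootstrap in the spirit of \cite{[ChenHe1]} iterates to uniform $C^k$ estimates on $[0,T)$ for every $k$. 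A smooth limit $\varphi(T)$ can then be extracted, and the Chen-He short-time existence theorem extends the flow past $T$. The principal obstacle in this plan is the entropy bound of the second paragraph: the $L^p$ hypothesis on scalar curvature alone does not control $\cH$, so the argument must leverage both the Mabuchi $K$-energy and the Calabi energy monotonicity together with careful potential-theoretic bounds on the remainder $L(\varphi)$, since no a priori $C^0$ bound on $\varphi$ is available. Once the entropy bound is in place, Theorem \ref{theo:main1a} and standard parabolic regularity do the rest.
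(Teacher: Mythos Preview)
Your proposal is correct and follows the same overall architecture as the paper: identify $(F,\varphi)$ as a solution of the scalar curvature type equation with $f=R(\omega_\varphi)$ and $\eta=\mathrm{Ric}(\omega_g)$, reduce the problem to a uniform entropy bound $\int_M F\,\omega_\varphi^n\le C(T)$, obtain that bound by combining monotonicity of the $K$-energy with a lower bound on the pluripotential part $J_{-\mathrm{Ric}_g}$ of Chen's decomposition, then apply Theorem~\ref{theo:main1a} and Chen--He short-time existence. The one substantive difference is how the distance/energy bound feeding into the $J_{-\mathrm{Ric}_g}$ estimate is obtained. The paper invokes the Calabi--Chen theorem that $d_2$ between two Calabi flows is non-increasing, applied to the time-shifted pair $\varphi(t)$ and $\psi(t)=\varphi(T/2+t)$, to conclude $d_2(\varphi(0),\varphi(t))\le C(T)$, and then passes to $d_1$ and cites \cite[Lemma~4.4]{[CC2]} for $|J_{-\mathrm{Ric}_g}|\le C\,d_1$. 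Your route is more elementary: since the Calabi energy equals $\|\dot\varphi\|_{L^2(\omega_\varphi)}^2$ and is non-increasing, integrating the speed over $[0,t]$ gives directly $d_2(\varphi(0),\varphi(t))\le t\cdot Ca(\varphi_0)^{1/2}\le T\cdot Ca(\varphi_0)^{1/2}$, after which the same $d_1$ and $(I-J)$ bounds follow. This avoids the Calabi--Chen contraction theorem entirely at the cost of nothing, and is arguably the cleaner argument. Your choice of normalization $\sup_M\varphi=0$ (versus the paper's $I_{\omega_g}(\varphi)=0$) is cosmetic, since the entropy, $(I-J)$, and the mixed Monge--Amp\`ere integrals in $J_{-\mathrm{Ric}_g}$ are all invariant under additive shifts; either normalization furnishes the $L^1$ bound on $\varphi$ needed to control the remainder.
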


 Theorem \ref{theo:main1} describes behaviour of scalar curvature near the
 finite singular time of Calabi flow. The Calabi flow can be extended as long as the Ricci curvature stays bounded by Chen-He in \cite{[ChenHe1]}, or the complex Hessian of scalar curvature $|\Na\bar \Na R|$ stays bounded by Li-Wang-Zheng in \cite{[LWZ]}, or the scalar curvature stays bounded by Chen-Cheng in \cite{[CC1]}.
Since the Calabi flow is non-increasing along the Calabi flow, the $L^2$ norm of scalar curvature is uniformly bounded in any dimensions. Thus, it is very interesting to study what will happen for the Calabi flow in the critical case $p=n$, especially for $n=2.$

The second application of Theorem \ref{theo:main1a} is to study the convergence  of the Calabi flow. To state our result, we introduce some notations. Let $(M, \oo_g)$ be a compact K\"ahler manifold.  In \cite{[FM]} Futaki-Mabuchi showed that there exists an extremal vector field $V$ determined by $(M, [\oo_g])$. We define $\cH_X$ the space of K\"ahler potentials which is invariant under the action of $JV$. With these notations, we have the following result.

\begin{theo}\label{theo:main2}Let $(M, \oo_g)$ be a compact K\"ahler manifold of complex dimension $n\geq 2$, which either admits an extremal K\"ahler metric, or satisfies the properness conditions(cf. Theorem \ref{theo:He}). If the  Calabi flow starting from a K\"ahler metric $\oo_{\varphi_0}$ with $\varphi_0\in \cH_X$ has uniformly bounded $L^p$ scalar curvature for some $p>n$, then the corresponding modified Calabi flow exists for all time and converges exponentially fast  to an extremal K\"ahler metric.

\end{theo}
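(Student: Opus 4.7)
The plan is to use Theorem \ref{theo:main1} for long time existence, Theorem \ref{theo:main1a} together with a parabolic bootstrap for uniform $C^\infty$ control of the trajectory, and then a \L{}ojasiewicz--Simon type argument for exponential convergence, following the scheme of Li--Wang--Zheng \cite{[LWZ]} who treated the stronger $L^\infty$ scalar curvature setting. First, the hypothesis $\sup_t\|R(\oo_{\varphi(t)})\|_{L^p(\oo_{\varphi(t)})}<\infty$ together with Theorem \ref{theo:main1} implies that the modified Calabi flow exists for all $t\in[0,\infty)$. After the normalization $\sup_M\varphi(t)=0$, the scalar curvature equation
$$(\oo_g+\pbp\varphi)^n=e^F\oo_g^n,\qquad \DD_\varphi F=-R(\oo_\varphi)+\tr_\varphi\operatorname{Ric}(\oo_g)$$
is exactly the setting of Theorem \ref{theo:main1a} with $f=R(\oo_\varphi)$ and $\eta=\operatorname{Ric}(\oo_g)$; its integral hypothesis follows from the monotonicity of the modified Mabuchi K-energy along the flow combined with the Chen--Tian entropy decomposition, the K-energy being bounded below either by the existence of an extremal metric or by the properness assumption of Theorem \ref{theo:He}. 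One thus obtains time-independent constants with $C^{-1}\oo_g\le\oo_{\varphi(t)}\le C\,\oo_g$ and $\|\varphi(t)\|_{W^{4,q}(\oo_g)}\le C(q)$ for every finite $q$.

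Next, I would upgrade these to uniform-in-$t$ $C^\infty$ bounds by a parabolic bootstrap. Differentiating $\oo_\varphi^n=e^F\oo_g^n$ along the modified flow gives $\partial_t F=\DD_\varphi(R-\un R-JV(\varphi))$, which, upon substituting $R=-\DD_\varphi F+\tr_\varphi\operatorname{Ric}(\oo_g)$, becomes a uniformly parabolic fourth order equation for $F$ whose coefficients are already controlled by the first step. Alternating spatial $L^q$ elliptic regularity for the scalar curvature equation with parabolic $L^q$ and Schauder estimates for this evolution equation produces uniform $C^{k,\alpha}$ bounds for every $k$, so the trajectory $\{\varphi(t)\}_{t\ge 0}$ is precompact in $C^\infty(M)$. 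Monotonicity and eventual vanishing of the modified Calabi energy force every $C^\infty$ subsequential limit to be an extremal K\"ahler potential in $\cH_X$, which by the uniqueness theorem of Chen--Tian / Berman--Berndtsson agrees with the reference extremal metric up to the torus action generated by $JV$.

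Finally, to promote subsequential convergence to exponential convergence in every $C^k$ norm I would apply a \L{}ojasiewicz--Simon gradient inequality for the modified Calabi energy near the limiting extremal metric, in the spirit of Chen--He \cite{[ChenHe1]}, Tosatti--Weinkove \cite{[TW]}, and Li--Wang--Zheng \cite{[LWZ]}. The assumption $\varphi_0\in\cH_X$ ensures that the flow remains in $\cH_X$, which kills the nontrivial directions of the automorphism group and removes any need for a diffeomorphism correction; under the alternative properness hypothesis of Theorem \ref{theo:He} the coercivity of the modified K-energy is used in place of the a priori existence of a limiting extremal metric.

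The main obstacle lies in the $C^\infty$ bootstrap: Theorem \ref{theo:main1a} only delivers $W^{4,q}$, hence $C^{3,\alpha}$, control on $\varphi$, which is borderline for inserting the metric coefficients into the fourth order parabolic equation for $F$ as H\"older coefficients. The iteration therefore has to be done in stages, feeding the spatial $W^{2,q}$ bound on $F$ back into the evolution equation without appealing to any pointwise bound on $R$; this is precisely where the present $L^p$ setting differs from the $L^\infty$ setting of Chen--Cheng \cite{[CC1]} and Li--Wang--Zheng \cite{[LWZ]}. Once uniform $C^\infty$ regularity of the trajectory is established, the convergence argument transfers essentially verbatim, since \cite{[LWZ]} uses the $L^\infty$ scalar curvature hypothesis only through the regularity it implies.
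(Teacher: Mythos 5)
There is a genuine gap in your second step, precisely at the point where you claim ``One thus obtains time-independent constants with $C^{-1}\oo_g\le\oo_{\varphi(t)}\le C\,\oo_g$.'' The entropy bound $\int_M F\,\oo^n\le C$ required by Theorem \ref{theo:main1a} does \emph{not} follow from monotonicity of $\cK_X$ plus its lower bound. From the decomposition (\ref{eq:K}) one has $\int_M\log\frac{\oo_\psi^n}{\oo_g^n}\oo_\psi^n=\cK_X(\psi)-J_{-Ric_g}(\psi)-J^X(\psi)$, so besides the upper bound $\cK_X(\psi(t))\le\cK_X(\varphi_0)$ you need uniform bounds on the $J$-functionals, and those are controlled only by $d_1(0,\psi(t))$ (Chen--Cheng \cite[Lemma 4.4]{[CC1]}, He \cite[Proposition 2.2]{[He6]}). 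On an infinite time interval the only available distance bound comes from the properness inequality (\ref{eq:F03}), which bounds $d_{1,G}(0,\psi(t))$, i.e.\ the distance \emph{modulo} the group $G=\Aut_0(M,V)$. A priori the flow can drift to infinity along a $G$-orbit, on which the entropy is unbounded while $d_{1,G}$ stays bounded. The paper therefore chooses, for each $t$, an automorphism $\rho_t\in G$ with $\oo_{\td\psi(t)}=\rho_t^*\oo_{\psi(t)}$ and $d_1(0,\td\psi(t))\le C$, applies Theorem \ref{theo:main1a} to the gauge-transformed potentials $\td\psi(t)$ (whose $L^p$ scalar curvature bound is inherited by invariance), and only then, in a separate gauge-fixing step, controls the elements $\rho_t$ themselves --- using the $d_2$-contraction along modified Calabi flows, a triangle inequality against a limiting extremal potential $\psi_e$, and the finite-dimensionality of $G$ --- so as to transfer the $C^{3,\alpha}$ bounds back to $\psi(t)$. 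Both the reduction modulo $G$ and the subsequent control of $\rho_t$ are absent from your argument, and without them the uniform estimates on $\oo_{\varphi(t)}$ (or $\oo_{\psi(t)}$) you assert do not follow.

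A secondary point: identifying the limit requires more than the vanishing of the modified Calabi energy together with a uniqueness theorem. The paper first shows that a $C^{3,\alpha}$ subsequential limit of $\td\psi(t_i)$ is a \emph{minimizer} of $\cK_X$ (here is where the lower bound of $\cK_X$ is actually used) and then invokes the Chen--Cheng continuity-path regularity argument, as in \cite[Theorem 3.6]{[He6]}, to prove this minimizer is a smooth extremal metric; this produces the reference potential $\psi_e$ needed for the gauge fixing. Your remaining steps --- the parabolic bootstrap to uniform $C^k$ bounds and a stability/\L{}ojasiewicz-type argument for exponential convergence --- match the paper's Step 5 (which cites Chen--He for higher regularity and Huang--Zheng \cite{[HZ]} for exponential convergence) and are fine once the uniform $C^{3,\alpha}$ control of $\psi(t)$ itself has actually been established.
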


Theorem \ref{theo:main2} extends Theorem 1.4 of Li-Wang-Zheng \cite{[LWZ]}. In \cite{[LWZ]}, by using the continuity method, Li-Wang-Zheng showed that the convergence of Calabi flow on extremal K\"ahler manifolds under the assumptions that the Calabi flows has long time existence and uniformly bounded scalar curvature. Theorem \ref{theo:main2} removes the condition on the long time existence of a family of Calabi flows and relaxes the condition on the uniform boundedness of the scalar curvature. \\

The proof of Theorem \ref{theo:main1a} relies on the estimates of Chen-Cheng \cite{[CC1]}. However, comparing with Chen-Cheng \cite{[CC1]} and Lu-Seyyedali \cite{[LS]}, we have the following difficulties:
\begin{enumerate}
  \item[(1).] The estimate of $\|n+\Delta_g\varphi\|_{L^q(\oo_g)}$ for any $q>1$.  We note that
  the Sobolev inequality used in \cite{[CC1]} and \cite{[LS]} is not enough to show
  the bound of $\|n+\Delta_g\varphi\|_{L^q(\oo_g)}$ under the assumption that $f\in L^p(\oo_{\varphi})$ with $p>n$. However, by the recent work of Guo-Phong-Song-Sturm \cite{[GPSS]} or Guedj-T\^o \cite{[GT]}, a Sobolev-type inequality can be proved with the Sobolev constant depending only on an entropy bound and the K\"ahler class. We observe that such a Sobolev-type inequality is exactly what we need to show the uniform bound of $\|n+\Delta_g\varphi\|_{L^q(\oo_g)}$.

  \item[(2).] The estimate of $\|n+\Delta_g\varphi\|_{C^0}.$ In \cite{[CC1]} and \cite{[LS]}, they calculate the equation of the quantity
\beq
w:=e^{\frac 12F}|\Na F|_{\varphi}^2+(n+\Delta_g\varphi)+1
\eeq and then use the Moser iteration to get the $L^{\infty}$ bound of $w.$  However, under the assumption that $f\in L^p(\oo_{\varphi})$ with $p>n$, we cannot directly show that $|w|$ is bounded. Instead, we show that $\||\Na F|_{\varphi}^2\|_{L^{\kappa}(\oo_{\varphi})}$ is bounded for some $\kappa>n$, and then show that $n+\Delta_g\varphi$ is bounded. This method is inspired by Chen-He \cite{[ChenHe4]}, where they observed that $n+\Delta_g \varphi$ is controlled by a constant depending on $\|F\|_{W^{1, q}(\oo_g)}$ for  $q>2n$. We observe that this process works when  $f\in L^p(\oo_{\varphi})$ for $p>n.$

\end{enumerate}

 {\bf Acknowledgements}:
The authors would like to thank Professors Xiuxiong Chen, Bing Wang and Weiyong He for helpful discussions. The third author would like to express his deepest gratitude to IHES and the K.C. Wong Education Foundation, also the CRM and the Simons Foundation, when parts of the work were undertaken during his stays.

\section{Estimates}
\subsection{The interpolation inequality and Sobolev inequality}
The following interpolation inequalities are standard, and we include a proof for completeness.

\begin{lem}\label{lem:inter}(cf. \cite[Equations (7.9) and (7.10)]{[GT2]}) If $0<p<r<q$,   for any $\ee>0$ we have
\beq
\|f\|_{L^r}\leq\|f\|_{L^q}^{\te}\|f\|_{L^p}^{1-\te}\leq\ee \|f\|_{L^q}+ C(\te)\ee^{-\frac {\frac {1}{p}-\frac {1}{r}}{\frac {1}{r}-\frac {1}{q}}}\|f\|_{L^p},
\eeq where $\te=\frac {(r-p)q}{(q-p)r}\in (0, 1)$ and $C(\te)=(1-\te)\te^{\frac {\te}{1-\te}}.$

\end{lem}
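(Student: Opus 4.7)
The plan is to handle the two inequalities separately, since they are of quite different character. For the left-hand inequality $\|f\|_{L^r}\le \|f\|_{L^q}^{\theta}\|f\|_{L^p}^{1-\theta}$, which is the log-convexity of $L^p$-norms, I would split $|f|^r = |f|^{r\theta}\cdot |f|^{r(1-\theta)}$ and apply H\"older's inequality with conjugate exponents $\tfrac{q}{r\theta}$ and $\tfrac{p}{r(1-\theta)}$. The fact that these are genuinely conjugate reduces to the identity $\tfrac{\theta}{q}+\tfrac{1-\theta}{p}=\tfrac{1}{r}$, and a short algebraic check shows this is equivalent to the stated value $\theta=\tfrac{(r-p)q}{(q-p)r}$, which automatically lies in $(0,1)$ because $p<r<q$. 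Extracting the $r$-th root yields the claim.

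For the right-hand inequality, I would use the weighted AM--GM bound $A^{\theta}B^{1-\theta}\le \theta A+(1-\theta) B$ for $A,B\ge 0$. The idea is to apply it to the rescalings $A=\lambda\|f\|_{L^q}$ and $B=\mu\|f\|_{L^p}$, choosing $\lambda,\mu>0$ with the normalization $\lambda^{\theta}\mu^{1-\theta}=1$ so that $A^{\theta}B^{1-\theta}=\|f\|_{L^q}^{\theta}\|f\|_{L^p}^{1-\theta}$. Then I would fix $\lambda$ by demanding $\theta\lambda=\epsilon$, i.e.\ $\lambda=\epsilon/\theta$, which forces $\mu=(\epsilon/\theta)^{-\theta/(1-\theta)}$, and hence
\[
\|f\|_{L^q}^{\theta}\|f\|_{L^p}^{1-\theta}\le \epsilon\|f\|_{L^q} + (1-\theta)\,\theta^{\theta/(1-\theta)}\,\epsilon^{-\theta/(1-\theta)}\|f\|_{L^p}.
\]

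To identify this with the statement it only remains to verify the algebraic identity $\tfrac{\theta}{1-\theta}=\tfrac{1/p-1/r}{1/r-1/q}$. This follows from the direct computation $1-\theta=\tfrac{p(q-r)}{(q-p)r}$, which gives $\tfrac{\theta}{1-\theta}=\tfrac{(r-p)q}{p(q-r)}$, and the right-hand side simplifies to the same fraction.

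Because this is a standard interpolation result, I do not anticipate any substantive obstacle; the only mild nuisance is bookkeeping the algebra so that the constant $C(\theta)$ and the exponent on $\epsilon$ come out exactly as stated, rather than in one of the several equivalent forms one encounters in applications of Young's inequality.
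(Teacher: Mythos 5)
Your proposal is correct and follows essentially the same route as the paper: the same H\"older splitting $|f|^r=|f|^{r\theta}|f|^{r(1-\theta)}$ with conjugate exponents $\tfrac{q}{r\theta}$, $\tfrac{p}{r(1-\theta)}$ for the first inequality, and the same weighted AM--GM/Young rescaling (the paper absorbs the scaling into the integrals and substitutes $\delta=\theta\epsilon^{1/\theta}$, which is equivalent to your normalization $\lambda^{\theta}\mu^{1-\theta}=1$ with $\theta\lambda=\epsilon$) for the second. The resulting constant and exponent agree exactly with the statement.
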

\begin{proof}Since $\frac 1q<\frac 1r<\frac 1p,$ there exists $\te=\frac {(r-p)q}{(q-p)r}$ such that
\beq
1=\frac {r\te}{q}+\frac {r(1-\te)}{p}.\no
\eeq
Let $\al=\frac q{r\te}$ and $\bb=\frac p{r(1-\te)}$. The H\"older inequality implies that
\beqs
\|f\|_{L^r}&=&\Big(\int_M\; f^{r\te}f^{r(1-\te)}\Big)^{\frac 1r}\leq \Big(\int_M\; f^{r\te \al}\Big)^{\frac 1{\al r}}\Big(\int_M\; f^{r(1-\te)\bb}\Big)^{\frac 1{\bb r}}\\
&=&\|f\|_{L^q}^{\te}\|f\|_{L^p}^{1-\te}.
\eeqs
On the other hand, we have
\beqs
\|f\|_{L^r}&\leq &\Big(\int_M\; f^{r\te \al}\Big)^{\frac 1{\al r}}\Big(\int_M\; f^{r(1-\te)\bb}\Big)^{\frac 1{\bb r}}\\
&=&\Big(\ee^{\frac {q}{\te}}\int_M\; f^{q}\Big)^{\frac {\te}{q}}\Big(\ee^{-\frac p{1-\te}}\int_M\; f^{p}\Big)^{\frac {1-\te}{p}}\\
&\leq&\te \Big(\ee^{\frac {q}{\te}}\int_M\; f^{q}\Big)^{\frac {1}{q}}+(1-\te)\Big(\ee^{-\frac p{1-\te}}\int_M\; f^{p}\Big)^{\frac {1}{p}}\\
&=&\te \ee^{\frac 1{\te}}\|f\|_q+(1-\te)\ee^{-\frac 1{1-\te}}\|f\|_p.
\eeqs Let $\dd=\te \ee^{\frac 1{\te}}$. It becomes
\beqs
\|f\|_r&\leq& \dd \|f\|_q+ (1-\te)\te^{\frac {\te}{1-\te}}\dd^{-\frac {\te}{1-\te}}\|f\|_p\\
&\leq&\dd \|f\|_q+ C(\te)\dd^{-\frac {\frac {1}{p}-\frac {1}{r}}{\frac {1}{r}-\frac {1}{q}}}\|f\|_p.
\eeqs The lemma is proved.
\end{proof}

Following Guo-Phong-Song-Sturm \cite{[GPSS]} or Guedj-T\^o \cite{[GT]}, we have the following Sobolev inequality.

\begin{lem}\label{lem:sob}(\cite[Theorem 2.6]{[GT]}, \cite[Theorem 2.1]{[GPSS]})For any $\ga\in (1, \frac n{n-1})$, we have the Sobolev inequality  with respect to the metric $\oo_{\varphi}$
\beq
\Big(\int_M\; u^{2\ga}\;\oo_{\varphi}^n\Big)^{\frac 1{\ga}}\leq C(n, \ga, g, \|F\|_{C^0})  \int_M\;  \Big(|\Na u|_{\varphi}^2 +u^2\Big)\;\oo_{\varphi}^n .\label{eq:B21}
\eeq
\end{lem}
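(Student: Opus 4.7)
Since the lemma is cited from Guo-Phong-Song-Sturm \cite{[GPSS]} and Guedj-T\^o \cite{[GT]}, the plan is to follow their auxiliary complex Monge-Amp\`ere strategy, which derives a Sobolev inequality on $(M,\oo_\varphi)$ from an $L^\infty$ estimate for a comparison Monge-Amp\`ere potential. The hypothesis $\|F\|_{C^0}<\infty$ dominates the relative entropy $\int_M Fe^F\oo_g^n$ required in \cite{[GPSS]}, placing us squarely in the regime where their machinery applies.

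First, by homogeneity and a density argument, reduce to proving the estimate for smooth $u\ge 0$ with $\int_M u^{2\ga}\oo_\varphi^n=1$. For a small regularization $\de>0$, solve the auxiliary complex Monge-Amp\`ere equation
$$(\oo_g+\pbp\psi)^n=c_u\,(u+\de)^{2\ga}e^F\oo_g^n,\qquad \sup_M\psi=0,$$
where $c_u>0$ is the unique constant making the right-hand side integrate to the total volume $V=\int_M\oo_g^n$. The restriction $\ga<n/(n-1)$, together with the classical Sobolev inequality on $(M,\oo_g)$ and the $C^0$ bound on $F$, shows that the right-hand side density lies in $L^q(\oo_g^n)$ for some $q>1$ uniformly in $u$. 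The Ko\l odziej--Guo--Phong--Song--Sturm $L^\infty$ estimate for the complex Monge-Amp\`ere equation then yields
$$\|\psi\|_{C^0}\leq C(n,\ga,\oo_g,\|F\|_{C^0}).$$

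Next, one tests the Monge-Amp\`ere identity $c_u(u+\de)^{2\ga}\oo_\varphi^n=(\oo_g+\pbp\psi)^n$ against a suitable power of $u+\de$ and integrates by parts using Stokes' theorem to transfer the $\partial,\bar\partial$ derivatives from $\psi$ onto $u+\de$. After expanding $(\oo_g+\pbp\psi)^n$ in powers of $\pbp\psi$, one is left with terms controlled by integrals of the form $\int_M(u+\de)^{\al-2}|\Na u|_\varphi^2\,\oo_\varphi^n$ multiplied by factors bounded by $\|\psi\|_{C^0}$ and $e^{\|F\|_{C^0}}$. A Cauchy-Schwarz inequality combined with the uniform bound on $\psi$, followed by letting $\de\to 0$, closes the estimate and yields the Sobolev-type inequality with the claimed dependence of the constant on $n,\ga,\oo_g,\|F\|_{C^0}$.

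The main obstacle lies in the integration-by-parts step: one must choose the test exponent so that the resulting inequality has precisely Sobolev exponent $1/\ga$ on the left-hand side, and the restriction $\ga<n/(n-1)$ is exactly what supplies enough room in H\"older's inequality to absorb the mixed gradient-times-function terms. Verifying that the cross terms in the binomial expansion of the Monge-Amp\`ere measure $(\oo_g+\pbp\psi)^n$ do not degrade the final constant, and that the constant depends only on the listed data, is the most delicate part of the calculation.
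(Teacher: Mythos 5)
Your proposal and the paper's proof diverge at the very first step: the paper does \emph{not} reprove the Guo--Phong--Song--Sturm / Guedj--T\^o theorem. It simply quotes their Poincar\'e--Sobolev inequality for the mean-zero part, $\bigl(\int_M|u-\un u|^{2\ga}\,\oo_\varphi^n\bigr)^{1/\ga}\le C(n,\ga,g,\|F\|_{C^0})\int_M|\Na u|_\varphi^2\,\oo_\varphi^n$ with $\un u$ the $\oo_\varphi$-average of $u$, and then removes the mean-zero normalization via the elementary bound $u^{2\ga}\le C(\ga)\bigl(|u-\un u|^{2\ga}+\un u^{2\ga}\bigr)$ together with the Cauchy--Schwarz estimate $\un u^{2\ga}\le C\bigl(\int_M u^2\,\oo_\varphi^n\bigr)^{\ga}$. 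That two-line reduction is the entire content of the paper's proof, and your write-up omits it in favour of re-deriving the cited black box.

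More importantly, the re-derivation you sketch has a genuine gap. You assert that, after normalizing $\int_M u^{2\ga}\oo_\varphi^n=1$, the density $c_u(u+\de)^{2\ga}e^F$ of the auxiliary Monge--Amp\`ere equation lies in $L^q(\oo_g^n)$ for some $q>1$ uniformly in $u$, by the classical Sobolev inequality on $(M,\oo_g)$ and the $C^0$ bound on $F$. This does not follow. The normalization only gives $\int_M u^{2\ga}\oo_g^n\le e^{\|F\|_{C^0}}$, and upgrading this to a uniform bound on $\int_M u^{2\ga q}\oo_g^n$ would require a gradient bound in the \emph{background} metric; but the hypothesis only controls $\int_M|\Na u|_\varphi^2\,\oo_\varphi^n$, and since $|\Na u|_g^2\le|\Na u|_\varphi^2(n+\Delta_g\varphi)$ with $n+\Delta_g\varphi$ completely uncontrolled at this stage, there is no passage from the $\oo_\varphi$-gradient to the $\oo_g$-gradient. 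Indeed, a uniform $L^{q}(\oo_g)$ bound on $u^{2\ga}$ for the normalized $u$ is essentially the higher integrability one is trying to prove, so the step is circular. The actual arguments in the cited references are built precisely to avoid this: Guo--Phong--Song--Sturm run a De Giorgi-type iteration on superlevel sets of $u$, feeding truncations of $u$ into the auxiliary equation and invoking the comparison-function argument of Guo--Phong--Tong, while Guedj--T\^o deduce the inequality from uniform two-sided bounds on the Green's functions of $\oo_\varphi$. A self-contained proof would have to reproduce one of those mechanisms; otherwise one should cite the theorem as the paper does and supply only the elementary reduction from the mean-zero version.
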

\begin{proof} Since $\oo_{\varphi}$ lies in the same K\"ahler class as $\oo$, Theorem 2.1 of \cite{[GPSS]} or Theorem 2.6 of \cite{[GT]} gives that for any $\ga\in (1, \frac n{n-1})$
\beq
\Big(\int_M\; |u-\un u|^{2\ga}\,\oo_{\varphi}^n\Big)^{\frac 1\ga}\leq C(n, \ga, g, \|F\|_{C^0})\int_M\; |\Na u|_{\varphi}^{2}\,\oo_{\varphi}^n,\nonumber
\eeq where $\un u=\frac 1{\vol(\oo_g)}\int_M\;u\,\oo_{\varphi}^n.$ Since
$$u^{2\ga}\leq C(\ga)(|u-\un u|^{2\ga}+\un u^{2\ga}),$$
we have
\beqn
\int_M\; u^{2\ga}\,\oo_{\varphi}^n&\leq&C(\ga)\Big(\int_M\; |u-\un u|^{2\ga}\,\oo_{\varphi}^n+\vol(\oo_g)\un u^{2\ga}\Big)\nonumber\\
&\leq&C(n, \ga, g, \|F\|_{C^0})\Big(\Big(\int_M\; |\Na u|_{\varphi}^{2}\,\oo_{\varphi}^n\Big)^{\ga}+\un u^{2\ga}\Big)\nonumber\\
&\leq&C(n, \ga, g, \|F\|_{C^0})\Big(\Big(\int_M\; |\Na u|_{\varphi}^{2}\,\oo_{\varphi}^n\Big)^{\ga}+\Big(\int_M\; u^2\,\oo_{\varphi}^n\Big)^{\ga}\Big).\nonumber
\eeqn
Thus, we obtain
\beq
\Big(\int_M\; u^{2\ga}\;\oo_{\varphi}^n\Big)^{\frac 1{\ga}}\leq C(n, \ga, g, \|F\|_{C^0})  \int_M\;  \Big(|\Na u|_{\varphi}^2 +u^2\Big)\;\oo_{\varphi}^n. \nonumber
\eeq The lemma is proved.
\end{proof}

\subsection{Estimates of $\|n+\Delta_g \varphi\|_{L^p}$}
In this subsection, we follow the calculation of Chen-Cheng \cite{[CC1]} \cite{[CC2]} and Zheng \cite{[Zheng]} to show that $\|n+\Delta_g \varphi\|_{L^p(\oo_{\varphi})}$ is bounded if $f\in L^q$ with $q>n.$ Similar result is showed by Lu-Seyyedali in \cite{[LS]} for $f\in L^q$ with large $q$ depending on $p. $ Throughout the paper, we will follow the notations of Chen-Cheng \cite{[CC1]}.

\begin{lem}\label{lem:va}Let
 \beq
 v=e^{-\alpha(F+\lambda\varphi)}(n+\Delta_g\varphi),\quad \alpha\geq p>1. \label{eq:v3}
 \eeq
There exists a constant $C(g, \max_M|\eta|_{\oo_g})$ such that for $\la>C(g, \max_M|\eta|_{\oo_g})$ we have
\beqn &&
\frac {3(p-1)}{p^2}\int_M\;  |\Na  v^{\frac p2}|_{\varphi}^2\;\oo_{\varphi}^n+\frac {\la \al}4\int_M\; e^{\frac 1{n-1}\al (F+\la \varphi)-\frac F{n-1}} v^{p+\frac{1}{n-1}}
\,\oo_{\varphi}^n \nonumber\\
&\leq & \int_M\; \Big(\td f+\frac {\al\la}{\al-1}+\frac 1n e^{-\frac Fn}R_g \Big)v^{p}\oo_{\varphi}^n,\label{eq:v12}
\eeqn where $\td f=\al(\la n-f).$

\end{lem}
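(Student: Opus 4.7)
The plan is to derive a differential inequality for $v$ of the form
\[
\Delta_\varphi v \geq -\bigl(\tilde f + C\lambda + e^{-F/n}R_g/n\bigr) v + \tfrac{\lambda\alpha}{2}e^{\frac{\alpha(F+\lambda\varphi)-F}{n-1}} v^{1+\frac{1}{n-1}} + (\text{good gradient terms}),
\]
multiply by $v^{p-1}$, integrate over $M$ with respect to $\omega_\varphi^n$, and integrate by parts. This mirrors the strategy of Chen--Cheng \cite{[CC1]} and Zheng \cite{[Zheng]}.

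The starting point is the classical Aubin--Yau identity for the Laplacian of $\log(n+\Delta_g\varphi)$ with respect to $\omega_\varphi$. Writing $u=n+\Delta_g\varphi$, one obtains
\[
\Delta_\varphi \log u \geq \frac{\Delta_\varphi F}{u}\cdot \text{(?)} - C(g)\,\tr_\varphi g,
\]
where the unfavourable curvature term is controlled by a constant depending only on the bisectional curvature of $\omega_g$. Combining this with $\Delta_\varphi F = -f + \tr_\varphi\eta$ and with
\[
\Delta_\varphi \varphi = n - \tr_\varphi g,
\]
and taking logarithmic derivatives through the definition $\log v = -\alpha(F+\lambda\varphi) + \log u$, I obtain
\[
\Delta_\varphi \log v \geq -\alpha(-f+\tr_\varphi\eta) - \alpha\lambda(n-\tr_\varphi g) + \Delta_\varphi\log u.
\]
For $\lambda > C(g,\max|\eta|)$ so that $\lambda\,\omega_g + \eta - C(g)\,\omega_g > 0$, the coefficient of $\tr_\varphi g$ becomes strictly positive, say at least $\tfrac{\lambda\alpha}{2}\tr_\varphi g$. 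This is the key use of $\lambda$ being large.

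Next I would convert $\Delta_\varphi\log v = v^{-1}\Delta_\varphi v - |\nabla\log v|^2_\varphi$ into an inequality for $\Delta_\varphi v$, multiply by $v^{p-1}$, and integrate. The integration by parts $\int v^{p-1}\Delta_\varphi v\,\omega_\varphi^n = -(p-1)\int v^{p-2}|\nabla v|^2_\varphi\,\omega_\varphi^n = -\tfrac{4(p-1)}{p^2}\int |\nabla v^{p/2}|^2_\varphi\,\omega_\varphi^n$ produces the gradient term on the left (the constant $3(p-1)/p^2$ appears after absorbing a small $|\nabla v^{p/2}|^2$ arising from the $-|\nabla\log v|^2$ term via Cauchy--Schwarz and using $\alpha\geq p$, the $\alpha/(\alpha-1)$ factor recording the absorption). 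The $R_g$ term enters via expressing the bisectional curvature contribution through the scalar curvature of $g$ combined with the factor $e^{-F/n}$ obtained from AM--GM on the eigenvalues of $g_\varphi$ with respect to $g$ (since $\det g_\varphi/\det g = e^F$).

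The main obstacle, and the source of the precise exponent $1+\tfrac{1}{n-1}$, is converting the favourable term $\tfrac{\lambda\alpha}{2}\tr_\varphi g \cdot v^p$ into the form $\tfrac{\lambda\alpha}{4} e^{\frac{\alpha(F+\lambda\varphi)-F}{n-1}}v^{p+\frac{1}{n-1}}$. This uses the arithmetic--geometric type inequality
\[
\tr_\varphi g \;\geq\; c_n\,(n+\Delta_g\varphi)^{\frac{1}{n-1}}\,e^{-\frac{F}{n-1}},
\]
which follows from the identity $\tr_\varphi g = e^{-F}\,\sigma_{n-1}(\lambda_i)$ together with Maclaurin's inequality applied to the eigenvalues $\lambda_i$ of $g_\varphi$ with respect to $g$ (using $\prod\lambda_i = e^F$ and $\sum\lambda_i = n+\Delta_g\varphi$). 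Inserting the definition $v = e^{-\alpha(F+\lambda\varphi)}u$ then recovers exactly the stated exponential weight $e^{\frac{\alpha(F+\lambda\varphi)-F}{n-1}}$ and the power $v^{p+\frac{1}{n-1}}$, after a final reabsorption of constants into the coefficient $\lambda\alpha/4$. The bookkeeping of the various $\varepsilon$'s and of the factor $\alpha/(\alpha-1)$ is delicate but essentially a matter of following the Chen--Cheng computation line by line with the replacement $|f|_\infty \rightsquigarrow |f|$.
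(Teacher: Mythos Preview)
Your outline has a genuine gap at the heart of the computation. The Aubin--Yau inequality for $u=n+\Delta_g\varphi$ produces a term $\Delta_g F - R_g$ (the Laplacian of $F$ with respect to the \emph{background} metric $g$, coming from $g^{i\bar j}\mathrm{Ric}(\omega_\varphi)_{i\bar j}=-\Delta_g F + R_g$), not $\Delta_\varphi F$. You cannot substitute the equation $\Delta_\varphi F = -f + \tr_\varphi\eta$ into this term; the two Laplacians are different operators, and there is no a priori bound relating them. Your question mark after $\Delta_\varphi F/u$ is exactly the spot where the argument breaks.

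In the paper's proof, this is the term $I_3 = -\int_M e^{-\alpha(F+\lambda\varphi)}\Delta_g F\, v^{p-1}\,\omega_\varphi^n$, and it is handled by a device you do not mention: one passes from $\omega_\varphi^n$ to $\omega_g^n$ via $e^{-\alpha(F+\lambda\varphi)}\omega_\varphi^n = e^{B}\omega_g^n$ with $B=(1-\alpha)F-\alpha\lambda\varphi$, rewrites $\Delta_g F$ in terms of $\Delta_g B$ and $\Delta_g\varphi$, and integrates by parts with respect to $g$. The $\Delta_g B$ piece ($J_1$) produces, after Cauchy--Schwarz and the comparison $|\nabla v|_g^2\leq |\nabla v|_\varphi^2(n+\Delta_g\varphi)$, a term $\tfrac{(p-1)^2}{4(\alpha-1)}\int v^{p-2}|\nabla v|_\varphi^2\,\omega_\varphi^n$; the hypothesis $\alpha\geq p$ is used precisely here to make this $\leq \tfrac{p-1}{4}\int v^{p-2}|\nabla v|_\varphi^2$, which can then be absorbed into the left side (this is the origin of the $3(p-1)/p^2$). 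The $\Delta_g\varphi$ piece ($J_2$) is bounded by $\tfrac{\alpha\lambda}{\alpha-1}\int v^p\,\omega_\varphi^n$, which is exactly the $\tfrac{\alpha\lambda}{\alpha-1}$ term in the statement. Neither of these constants arises from absorbing a $-|\nabla\log v|_\varphi^2$ term as you suggest. The $e^{-F/n}R_g$ factor does come from AM--GM ($n+\Delta_g\varphi\geq ne^{F/n}$), but applied to $e^{-\alpha(F+\lambda\varphi)}R_g v^{p-1}=\tfrac{R_g}{n+\Delta_g\varphi}v^p$, not to any bisectional curvature contribution.
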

\begin{proof}
According to of Cheng-Chen \cite[Equation (3.8)]{[CC1]},
\beqs
\Delta_{\varphi}v&\geq& e^{-(\alpha+\frac{1}{n-1})F-\alpha \lambda\varphi}(\frac{\lambda\alpha}{2}-C(g))(n+\Delta_g\varphi)^{1+\frac{1}{n-1}}\\
&&-\alpha e^{-\alpha(F+\lambda\varphi)}(\lambda n-f)(n+\Delta_g\varphi) +e^{-\alpha(F+\lambda\varphi)}(\Delta_g F-R_g),
\eeqs where $C(g)$ is a constant depending only on $g$, $R_g$ denotes the scalar curvature of the background metric $g$, and $\la>2\max_M|\eta|_{\oo_g}.$
Thus, we get
\beqs
\Delta_{\varphi}v
& \geq& e^{\frac 1{n-1}\al (F+\la \varphi)-\frac F{n-1}}(\frac{\lambda\alpha}{2}-C(g))v^{1+\frac{1}{n-1}}
-\alpha (\lambda n-f)v
 +e^{-\alpha(F+\lambda\varphi)}(\Delta_g F-R_g)\\
 &\geq&A v^{1+\frac{1}{n-1}}
-\td f v
 +e^{-\alpha(F+\lambda\varphi)}(\Delta_g F-R_g),
\eeqs
where \beq A=e^{\frac 1{n-1}\al (F+\la \varphi)-\frac F{n-1}}(\frac{\lambda\alpha}{2}-C(g)),\quad \td f=\alpha (\lambda n-f). \label{eq:v2}\eeq
Multiplying both sides of (\ref{eq:v2}) with $v^{p-1}$ and integrating by parts, we have the integral inequality
\beqn &&
\int_M\;(p-1) v^{p-2}|\Na v|_{\varphi}^2\;\oo_{\varphi}^n=-\int_M\;v^{p-1}\Delta_{\varphi}v\;\oo_{\varphi}^n\nonumber\\
&\leq&\int_M\; \Big(-A v^{p+\frac{1}{n-1}}
+\td f v^p
 -e^{-\alpha(F+\lambda\varphi)}(\Delta_g F-R_g)v^{p-1}\Big)\,\oo_{\varphi}^n\nonumber\\
 &=&\int_M\; \Big(-A v^{p+\frac{1}{n-1}}
+\td f v^p
 -e^{-\alpha(F+\lambda\varphi)}\Delta_g Fv^{p-1}+e^{-\alpha(F+\lambda\varphi)}R_gv^{p-1}\Big)\,\oo_{\varphi}^n\nonumber\\
 &:=&I_1+I_2+I_3+I_4. \label{eq:v4a}
\eeqn
We compute
\beqn
I_3&=&\int_M\; -e^{-\alpha(F+\lambda\varphi)}\Delta_g  Fv^{p-1}\,\oo_{\varphi}^n\nonumber\\
&=&\int_M\; -e^{(1-\al)F-\al \la \varphi}\Delta_g  Fv^{p-1} \, \oo_g^n\nonumber\\
&=&\int_M\; -\frac 1{1-\al}e^{(1-\al)F-\al \la \varphi}\Delta_g ((1-\al)F-\al\la \varphi) v^{p-1} \, \oo_g^n\nonumber\\&&
-\int_M \frac {\al\la}{1-\al}  e^{(1-\al)F
-\al \la \varphi}\Delta_g \varphi v^{p-1} \, \oo_g^n\nonumber\\
&:=&J_1+J_2. \label{eq:v5a}
\eeqn
Assume that $\al>1$ and let $B=(1-\al)F-\al \la \varphi$. Then we calculate
\beqn
J_1&=&\frac 1{\al-1}\int_M\; e^B\Delta_gB v^{p-1}\,\oo_g^n\nonumber\\
&=&-\frac 1{\al-1}\int_M\;\langle \Na B, \Na(e^B v^{p-1})\rangle_g\,\oo_g^n\nonumber\\
&=&-\frac 1{\al-1}\int_M\;\Big(e^B v^{p-1}|\Na B|^2+(p-1)e^Bv^{p-2}\langle \Na B, \Na v\rangle_g\Big)\,\oo_g^n.\label{eq:v6a}
\eeqn Throughout the paper,  $|\cdot|$ denotes the norm with respect to the metric $g$.
We also note that
\beq
|\langle \Na B, \Na v\rangle_g|\leq \frac 1{p-1}v|\Na B|^2+\frac {p-1}{4v}|\Na v|^2. \label{eq:v13}
\eeq Inserting (\ref{eq:v13}) into (\ref{eq:v6a}), we derive that
\beqn
J_1&\leq &\frac {(p-1)^2}{4(\al-1)}\int_M\;e^{(1-\al)F-\al\la \varphi} v^{p-3}|\Na v|^2\oo_g^n\nonumber\\
&\leq&\frac {(p-1)^2}{4(\al-1)}\int_M\;e^{-\alpha(F+\lambda\varphi)} v^{p-3}|\Na v|_{\varphi}^2(n+\Delta_g \varphi)\oo_{\varphi}^n\nonumber\\
&=&\frac {(p-1)^2}{4(\al-1)}\int_M\; v^{p-2}|\Na v|_{\varphi}^2\,\oo_{\varphi}^n,  \label{eq:v14}
\eeqn where we used the inequality
\beq
|\Na v|^2\leq |\Na v|_{\varphi}^2(n+\Delta_g \varphi).\label{eq:norm}
\eeq
Moreover, we compute
\beqn
J_2&=&\int_M\;-\frac {\al\la}{1-\al}e^{(1-\al)F-\al\la \varphi}\Delta_g \varphi v^{p-1} \, \oo_g^n\nonumber\\
&=&\int_M\;\frac {\al\la}{\al-1}e^{-\alpha(F+\lambda\varphi)}\Delta_g \,\varphi\, v^{p-1} \, \oo_{\varphi}^n\nonumber\\
&\leq&\int_M\;\frac {\al\la}{\al-1}e^{-\alpha(F+\lambda\varphi)}(n+\Delta_g \varphi)\, v^{p-1} \, \oo_{\varphi}^n\nonumber\\
&=&\int_M\;\frac {\al\la}{\al-1} v^{p} \, \oo_{\varphi}^n. \label{eq:v7a}
\eeqn
Combining (\ref{eq:v5a})-(\ref{eq:v7a}), we have the estimate
\beqn
I_3&\leq& \frac {(p-1)^2}{4(\al-1)}\int_M\; v^{p-2}|\Na v|_{\varphi}^2\,\oo_{\varphi}^n+\int_M\;\frac {\al\la}{\al-1} v^{p} \, \oo_{\varphi}^n\nonumber\\
&\leq&\frac {p-1}4\int_M\; v^{p-2}|\Na v|_{\varphi}^2\,\oo_{\varphi}^n+\int_M\;\frac {\al\la}{\al-1} v^{p} \, \oo_{\varphi}^n, \label{eq:v8}
\eeqn  where we choose $\al\geq p$ such that $\frac {(p-1)^2}{4(\al-1)}\leq\frac {p-1}4. $
Putting (\ref{eq:v8}) back into (\ref{eq:v4a}), we have
\beqn
\int_M\;(p-1) v^{p-2}|\Na  v|_{\varphi}^2\;\oo_{\varphi}^n&\leq& \int_M\; \Big(-A v^{p+\frac{1}{n-1}}
+\td f v^p\Big)\oo_{\varphi}^n\nonumber\\
&&+\frac {p-1}4\int_M\; v^{p-2}|\Na v|_{\varphi}^2\,\oo_{\varphi}^n+\int_M\;\frac {\al\la}{\al-1} v^{p} \, \oo_{\varphi}^n\nonumber\\
&&+\int_M\;e^{-\alpha(F+\lambda\varphi)}R_gv^{p-1} \,\oo_{\varphi}^n,\label{eq:v9a}
\eeqn
which implies that
\beqn &&
\frac {3(p-1)}{4}\int_M\;  v^{p-2}|\Na  v|_{\varphi}^2\;\oo_{\varphi}^n+\int_M\; A v^{p+\frac{1}{n-1}}
\,\oo_{\varphi}^n\no\\&\leq& \int_M\; \Big(\td f v^p+\frac {\al\la}{\al-1} v^{p}+e^{-\alpha(F+\lambda\varphi)}R_gv^{p-1} \Big)\oo_{\varphi}^n. \label{eq:A21}
\eeqn
Note that we can choose $\la>C(g, \max_M|\eta|_{\oo_g})$ such that
\beq
A\geq \frac {\la \al}{4}e^{\frac 1{n-1}\al (F+\la \varphi)-\frac F{n-1}}.
\label{eq:A22}
\eeq
Meanwhile, $
n+\Delta_g  \varphi\geq ne^{\frac Fn}
$ gives
\beq
e^{-\al(F+\la \varphi)}=\frac v{n+\Delta_g \varphi}\leq\frac 1n e^{-\frac Fn}v. \label{eq:v10a}
\eeq
Taking (\ref{eq:A22}) and (\ref{eq:v10a}) in (\ref{eq:A21}), we thus obtain
\beqn &&
\frac {3(p-1)}{p^2}\int_M\;  |\Na  v^{\frac p2}|_{\varphi}^2\;\oo_{\varphi}^n+\frac {\la \al}4\int_M\; e^{\frac 1{n-1}\al (F+\la \varphi)-\frac F{n-1}} v^{p+\frac{1}{n-1}}
\,\oo_{\varphi}^n \nonumber\\&\leq& \int_M\; \Big(\td f v^p+\frac {\al\la}{\al-1} v^{p}+e^{-\alpha(F+\lambda\varphi)}R_gv^{p-1} \Big)\oo_{\varphi}^n\no\\
&\leq & \int_M\; \Big(\td f+\frac {\al\la}{\al-1}+\frac 1n e^{-\frac Fn}R_g \Big)v^{p}\oo_{\varphi}^n.\label{eq:v11}
\eeqn
The lemma is proved.
\end{proof}

Next, we use Lemma \ref{lem:va} and Lemma \ref{lem:sob} to show that $\|n+\Delta_g\varphi\|_{L^q}$ is bounded for any $q$.

\begin{lem} \label{lem:varphi}For any $s>n$ and any $q\geq 1$, we have
\beq
\int_M\; (n+\Delta_g  \varphi)^q\,\oo_g^n\leq C(n, s, g, q, \|F\|_{C^0}, \|\varphi\|_{C^0}, \|f\|_{L^{s}(\oo_{\varphi})}, \max_M|\eta|_{\oo_g}). \label{eq:y8}
\eeq

\end{lem}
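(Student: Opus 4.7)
The plan is to combine Lemma~\ref{lem:va} with the Sobolev inequality of Lemma~\ref{lem:sob} in a Moser-type iteration. The decisive observation is that the hypothesis $s>n$ gives the strict inequality $s':=s/(s-1)<n/(n-1)$, so I may fix once and for all a $\gamma\in(s',n/(n-1))$ which is simultaneously admissible in Lemma~\ref{lem:sob} and leaves room to interpolate (via Lemma~\ref{lem:inter}) between the exponents $p$, $ps'$ and $p\gamma$. For each fixed $p\geq 1$ I will take $\alpha=\max(p,2)$; since $\|F\|_{C^0}$ and $\|\varphi\|_{C^0}$ are bounded, the quantity $v=e^{-\alpha(F+\lambda\varphi)}(n+\Delta_g\varphi)$ is comparable to $n+\Delta_g\varphi$ in every $L^q$ norm, up to constants depending only on $\alpha$, $\|F\|_{C^0}$ and $\|\varphi\|_{C^0}$.

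The iteration step proceeds as follows. Applying Lemma~\ref{lem:sob} to $u=v^{p/2}$ yields
$$\|v\|_{L^{p\gamma}(\omega_\varphi)}^p\leq C\bigl(\|\nabla v^{p/2}\|_{L^2(\omega_\varphi)}^2+\|v\|_{L^p(\omega_\varphi)}^p\bigr).$$
Combining this with (\ref{eq:v12}) -- discarding the nonnegative $v^{p+1/(n-1)}$ term on its left side and using $|\tilde f|\leq \alpha(\lambda n+|f|)$ on the right -- produces
$$\|v\|_{L^{p\gamma}(\omega_\varphi)}^p\leq C_1(p)\int_M (|f|+1)v^p\,\omega_\varphi^n+C_2(p)\|v\|_{L^p(\omega_\varphi)}^p.$$
The dangerous $\int|f|v^p$ term is treated by H\"older: $\int|f|v^p\leq\|f\|_{L^s(\omega_\varphi)}\|v\|_{L^{ps'}(\omega_\varphi)}^p$. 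Since $p<ps'<p\gamma$, Lemma~\ref{lem:inter} gives, for every $\varepsilon>0$,
$$\|v\|_{L^{ps'}(\omega_\varphi)}\leq \varepsilon\|v\|_{L^{p\gamma}(\omega_\varphi)}+C(\varepsilon)\|v\|_{L^p(\omega_\varphi)}.$$
Raising to the $p$th power, using $(a+b)^p\leq 2^{p-1}(a^p+b^p)$, and choosing $\varepsilon$ small enough (in terms of $p$ and $\|f\|_{L^s(\omega_\varphi)}$) that the resulting $\|v\|_{L^{p\gamma}}^p$ contribution can be absorbed into the left side, one arrives at a reverse-H\"older inequality
$$\|v\|_{L^{p\gamma}(\omega_\varphi)}\leq C_3\,\|v\|_{L^p(\omega_\varphi)},$$
with $C_3$ depending on $p$ together with all the parameters listed in the statement.

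For the initialization, $\int_M(n+\Delta_g\varphi)\omega_\varphi^n\leq e^{\|F\|_{C^0}}\int_M(n+\Delta_g\varphi)\omega_g^n=e^{\|F\|_{C^0}}n\vol(\omega_g)$ is bounded by a topological constant, so $\|v\|_{L^1(\omega_\varphi)}$ is bounded. Iterating the reverse-H\"older inequality finitely many times -- at each step replacing $p$ by $p\gamma$ and re-choosing $\alpha$ -- produces a bound on $\|v\|_{L^q(\omega_\varphi)}$ for any $q\geq 1$, which transfers to $\|n+\Delta_g\varphi\|_{L^q(\omega_g)}$ via the above equivalence and $\omega_\varphi^n=e^F\omega_g^n$. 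The $p$-dependence of the constants $C_1$, $C_2$, $C_3$ across the iteration is uncomfortable but harmless, since only finitely many steps are needed to reach any prescribed $q$. The main technical obstacle is the $\int|f|v^p$ term, which prevents one from reading the estimate off directly from Lemma~\ref{lem:va}; its resolution hinges on the strict gap $s'<\gamma<n/(n-1)$ available exactly under the hypothesis $s>n$, and this is the point where the critical exponent $s=n$ would fall out of reach.
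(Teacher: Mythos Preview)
Your approach is essentially the paper's: combine Lemma~\ref{lem:va} with the Sobolev inequality of Lemma~\ref{lem:sob}, use H\"older to isolate $\|f\|_{L^s}$, and exploit the strict gap $s'<\gamma<n/(n-1)$ via interpolation. The paper organizes the iteration slightly differently---it transfers from $v$ to $\tilde v=n+\Delta_g\varphi$ (taking $\alpha=2p$), obtains $\|\tilde v\|_{L^{p\gamma}}\leq C^{1/p}\|\tilde v\|_{L^{ps'}}$ directly, iterates with ratio $\theta=\gamma/s'>1$, and only interpolates once at the end between $\|\tilde v\|_{L^{p_0}}$ and $\|\tilde v\|_{L^1}$---whereas you interpolate and absorb at every step to reach $\|v\|_{L^{p\gamma}}\leq C_3(p)\|v\|_{L^p}$. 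Either bookkeeping works.

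One small point to patch: your reverse-H\"older step needs $p>1$, since the coefficient $3(p-1)/p^2$ in \eqref{eq:v12} vanishes at $p=1$, so you cannot literally start the iteration from the $L^1$ bound. The fix is the same trick you already use: pick any $p_0\in(1,\gamma)$, interpolate $\|v\|_{L^{p_0}}\leq\|v\|_{L^{p_0\gamma}}^{\theta}\|v\|_{L^1}^{1-\theta}$, substitute into the reverse-H\"older at $p=p_0$, and absorb the $\|v\|_{L^{p_0\gamma}}^{\theta}$ factor into the left. This is exactly how the paper closes the loop in \eqref{eq:C01}--\eqref{eq:B29}.
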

\begin{proof}For $p>1$, applying Lemma \ref{lem:sob} for the function $u=v^{\frac p2}$ and using Lemma \ref{lem:va}, we have
\beqn
\Big(\int_M\; v^{p\ga}\,\oo_\varphi^n\Big)^{\frac 1{\ga}}&\leq& C(n, \ga, g, \|F\|_{C^0}) \int_M\;(|\Na  v^{\frac p2}|_{\varphi}^2+v^p)\,\oo_{\varphi}^n\nonumber\\&\leq & C(n, \ga, g, \|F\|_{C^0})\frac {p^2}{p-1}\int_M\; G v^p\,\oo_{\varphi}^n,
\label{eq:y1}
\eeqn
 where  $\ga\in (1, \frac n{n-1}) $ will be chosen later and
\beq
G=\alpha (\lambda n-f)+\frac {\al\la}{\al-1}+\frac 1n e^{-\frac Fn}R_g+1.\label{eq:G}
\eeq
Set $\al=2p$ and $\la>C(g, \max_M|\eta|_{\oo_g})$ as in Lemma \ref{lem:va}. By the definition of $v$ \eqref{eq:v3} and $\td v=n+\Delta_g\varphi$,  we get
\beq
e^{-2p(\|F\|_{C^0}+\la \|\varphi\|_{C^0})}\td v\leq v\leq e^{2p(\|F\|_{C^0}+\la \|\varphi\|_{C^0})}\td v.
\eeq
Letting $C_p=2 p^2 (\|F\|_{C^0}+\lambda\|\varphi\|_{C^0})$, we also get
\begin{align*}
\|\td v^{p}\|_{L^{\ga}(\oo_{\varphi})}\leq e^{C_p}
\Big(\int_M\; v^{p\ga}\,\oo_\varphi^n\Big)^{\frac 1{\ga}}= e^{C_p} \|v^p\|_{L^{\gamma}(\oo_{\varphi})}.
\end{align*}
Applying the H\"older inequality to the last term in (\ref{eq:y1}) with $r, s>0, \frac 1r+\frac 1s=1$, we see that
\beqn
\int_M\; G v^p\,\oo_{\varphi}^n&\leq& \|G\|_{L^{s}(\oo_{\varphi})}\|v^p\|_{L^{r}(\oo_{\varphi})}.
\eeqn
Assume that $p\geq 2.$ Then we have  $\frac{p^2}{p-1}\leq p^2$.
Therefore, we conclude from (\ref{eq:y1}) that
\beqn
\|\td v^{p}\|_{L^{\ga}(\oo_{\varphi})}&\leq& p^2
C(n, \ga, g, \|F\|_{C^0}, \|\varphi\|_{C^0}, \max_M|\eta|_{\oo_g})  \;\|G\|_{L^{s}(\oo_{\varphi})}\|\td v^p\|_{L^{r}(\oo_{\varphi})},
\eeqn
which becomes
\beqn
\|\td v\|_{L^{p\ga}(\oo_{\varphi})}&\leq& p^\frac{2}{p} C^{\frac{1}{p}}  \|G\|_{L^{s}(\oo_{\varphi})}^{\frac 1p}\|\td v\|_{L^{p r}(\oo_{\varphi})},\label{eq:B10}
\eeqn
if we write $C=C(n, \ga, g, \|F\|_{C^0}, \|\varphi\|_{C^0}, \max_M|\eta|_{\oo_g})$.

Let $s>n$. Then $r=\frac s{s-1}\in (1, \frac n{n-1})$ and we choose $\ga\in (r, \frac n{n-1})$.
Let $\te=\frac {\ga}{r}>1$ and $p_i=\te^i p_0$ where $p_0=2r>r.$ The standard Moser iteration with $p=2
\theta^i$ in \eqref{eq:B10} leads to
\beqn
\|\td v\|_{L^{p_{i+1}}(\oo_{\varphi})}
&\leq&(2\theta^i)^\frac{1}{\theta^i} C^{\frac{1}{2\theta^i}}   \|G\|_{L^{s}(\oo_{\varphi})}^{r p_0^{-1}\te^{-i}}\|\td v\|_{L^{p_i}(\oo_{\varphi})}\nonumber\\
&\leq& \prod_{j=0}^i(2\theta^j)^{\theta^{-j}} C^{\frac{1}{2} \sum_{j=0}^i \theta^{-j}}  \cdot \|G\|_{L^{s}(\oo_{\varphi})}^{r p_0^{-1}\sum_{j=0}^i\te^{-j}}\cdot \|\td v\|_{L^{p_0}(\oo_{\varphi})}\nonumber\\
&\leq& C(\theta)
\cdot \|G\|_{L^{s}(\oo_{\varphi})}^{\frac {r p_0^{-1}\te}{\te-1} }\cdot\|\td v\|_{L^{p_0}(\oo_{\varphi})},\label{eq:C01}
\eeqn where $C(\theta)$ further depends on $\theta$.
Lemma \ref{lem:inter} gives us that
\beqn
\|\td v\|_{L^{p_0}(\oo_{\varphi})}&\leq&\|\td v\|_{L^{p_{i+1}}(\oo_{\varphi})}^{\te_1}\|\td v\|_{L^{1}(\oo_{\varphi})}^{(1-\te_1)},\quad \te_1=\frac {1-\frac 1{p_0}}{1-\frac 1{p_{i+1}}}.
 \label{eq:C02}
\eeqn
Combining (\ref{eq:C01}) with (\ref{eq:C02}), we thus obtain that
$$
\|\td v\|_{L^{p_{i+1}}(\oo_{\varphi})}\leq C(\theta) \cdot \|G\|_{L^{s}(\oo_{\varphi})}^{\frac {r p_0^{-1}\te}{\te-1} }\|\td v\|_{L^{p_{i+1}}(\oo_{\varphi})}^{\te_1}\|\td v\|_{L^{1}(\oo_{\varphi})}^{(1-\te_1)}.
$$
This implies that
\beqn
\|\td v\|_{L^{p_{i+1}}(\oo_{\varphi})}&\leq& C(\theta)^{\frac{1}{1-\theta_1}} \cdot \|G\|_{L^{s}(\oo_{\varphi})}^{\frac {r p_0^{-1}\te}{(\te-1)(1-\te_1)} }\|\td v\|_{L^{1}(\oo_{\varphi})}. \label{eq:B29}
\eeqn   Note that $\|\td v\|_{L^{1}(\oo_{\varphi})}\leq C(\|F\|_{C^0})\|\td v\|_{L^{1}(\oo_{g})}$ and
\beq
\|\td v\|_{L^{1}(\oo_{g})}=\int_M\;(n+\Delta_g\varphi)\,\oo_g^n\leq C(n, g).\label{eq:B23}
\eeq
Thus, (\ref{eq:B29}) and (\ref{eq:B23}) imply that  $\|\td v\|_{L^q(\oo_{\varphi})}$ is bounded for any $q\geq 1$ by Lemma \ref{lem:inter}. The lemma is proved.
\end{proof}

\subsection{Estimates of $\|\Na F\|_{L^p}$}

The following result shows that $|\Na F|_{\varphi}^2\in L^{\kappa}(\oo_{\varphi})$ for some $\kappa>n$ if $f\in L^{q}(\oo_{\varphi})$ for some $q>n.$

\begin{lem}\label{lem:F} If $f\in L^{2b}$ for some $b\in (\frac n2, n)$, then for any $\kappa$ satisfying
\beq
0<\kappa<\frac {nb}{n-b},\label{eq:A16}
\eeq
 we have
\beq \int_M\; |\Na F|_{\varphi}^{2\kappa}\,\oo_{\varphi}^n\leq C(n, b, g, \kappa, \max_M|\eta|_{\oo_g}, \|F\|_{C^0}, \|f\|_{L^{2b}(\oo_{\varphi})}, \|\td v\|_{L^{2b(n-1)}(\oo_{\varphi})}).  \label{eq:B25}\eeq
Note that the upper bound of (\ref{eq:A16}) is greater than $n$.

\end{lem}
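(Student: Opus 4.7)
The plan is to upgrade the information $\Delta_\varphi F=-f+\tr_\varphi\eta$ to an $L^{2\kappa}(\oo_\varphi)$-bound on $|\nabla F|_\varphi$ by combining the Bochner identity, the Sobolev inequality of Lemma \ref{lem:sob}, and a Moser-type iteration. The target exponent $\kappa<\tfrac{nb}{n-b}$ is precisely the Sobolev conjugate of $2b$ in real dimension $2n$, mirroring the Calder\'on--Zygmund $+$ Sobolev chain for a Poisson equation with $L^{2b}$ right-hand side; note that the hypothesis $b>n/2$ ensures $\tfrac{nb}{n-b}>n$, matching the claimed lower bound on the range.

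First I would bound $\tr_\varphi\eta$ and reduce to a Poisson-type problem. Diagonalizing $\oo_\varphi$ against $\oo_g$ with eigenvalues $\lambda_i$, the Monge--Amp\`ere equation gives $\prod_i\lambda_i=e^F$, so by AM--GM
$$
\lambda_i^{-1}\leq C(n)\,e^{-F}(n+\Delta_g\varphi)^{n-1},\qquad |\tr_\varphi\eta|\leq C(n,\max_M|\eta|_{\oo_g})\,e^{-F}(n+\Delta_g\varphi)^{n-1}.
$$
Setting $h:=-f+\tr_\varphi\eta$ and using the $C^0$-bound on $F$,
$$
\|h\|_{L^{2b}(\oo_\varphi)}\leq \|f\|_{L^{2b}(\oo_\varphi)}+C\|\td v\|_{L^{2b(n-1)}(\oo_\varphi)}^{n-1},
$$
which is precisely where the $\|\td v\|_{L^{2b(n-1)}}$-dependence in the claim enters. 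The remaining task is to control $|\nabla F|_\varphi$ in $L^{2\kappa}$ from $\Delta_\varphi F=h\in L^{2b}(\oo_\varphi)$.

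Next I would apply the Bochner identity to $w:=|\nabla F|_\varphi^2$. Substituting $\mathrm{Ric}_\varphi=\mathrm{Ric}_g-\partial\bar\partial F$ and using the pointwise bound $|(\partial\bar\partial F)(\nabla F,\overline{\nabla F})|\leq|\partial\bar\partial F|_\varphi\cdot w$, the Hessian contribution is absorbed into the good term $|\nabla\bar\nabla F|_\varphi^2$ via $\epsilon$-Cauchy--Schwarz, at the cost of a residual $Cw^2$. Multiplying by $w^{\kappa-1}$, integrating, and shifting one derivative off $h$ in the cross term $\int w^{\kappa-1}\langle\nabla F,\nabla h\rangle_\varphi\,\oo_\varphi^n$ via the equation $\Delta_\varphi F=h$, I obtain a Caccioppoli-type inequality
$$
\int\bigl|\nabla w^{\kappa/2}\bigr|_\varphi^2\,\oo_\varphi^n\leq C\int\bigl(w^{\kappa+1}+w^{\kappa-1}h^2+w^\kappa\bigr)\,\oo_\varphi^n.
$$
Substituting $u=w^{\kappa/2}$ into Lemma \ref{lem:sob} then yields the Moser recursion
$$
\Big(\int w^{\kappa\gamma}\,\oo_\varphi^n\Big)^{1/\gamma}\leq C\int\bigl(w^{\kappa+1}+w^{\kappa-1}h^2+w^\kappa\bigr)\,\oo_\varphi^n,\qquad\gamma\in(1,\tfrac{n}{n-1}).
$$

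Finally, starting from the $L^1$-base estimate $\int w\,\oo_\varphi^n=-\int Fh\,\oo_\varphi^n\leq C$, I would iterate: apply H\"older to $w^{\kappa-1}h^2$ using the $L^{2b}$-bound on $h$, and use Lemma \ref{lem:inter} to interpolate $\|w\|_{L^{\kappa+1}}$ between $\|w\|_{L^1}$ and $\|w\|_{L^{\kappa\gamma}}$, absorbing the latter into the left-hand side at each step. Tracking the exponents shows the iteration closes up to the threshold $\kappa<\tfrac{nb}{n-b}$, matching the sharp Sobolev loss for a Poisson equation with $L^{2b}$ data. The main obstacle is the control of the super-quadratic term $w^{\kappa+1}$ coming from the curvature contribution in Bochner: the admissible range of $\kappa$ is dictated by combining the Sobolev gain $\kappa\gamma>\kappa+1$ with the H\"older loss from passing through $h$, which together yield precisely the critical exponent $\tfrac{nb}{n-b}$ after optimizing over $\gamma\in(1,n/(n-1))$.
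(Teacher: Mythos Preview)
Your outline has a genuine gap at the super-quadratic term $\int w^{\kappa+1}$. Once you absorb the $\partial\bar\partial F$-piece of $\mathrm{Ric}_\varphi$ into the Hessian via $\epsilon$-Cauchy--Schwarz, the residual $Cw^2$ is \emph{super-critical} for the Sobolev/interpolation scheme, and it does not interact with $h$ at all. Interpolating $\|w\|_{L^{\kappa+1}}$ between $\|w\|_{L^1}$ and $\|w\|_{L^{\kappa\gamma}}$ gives
\[
\|w\|_{L^{\kappa+1}}^{\kappa+1}\leq \|w\|_{L^{\kappa\gamma}}^{\theta(\kappa+1)}\|w\|_{L^1}^{(1-\theta)(\kappa+1)},\qquad \theta(\kappa+1)=\kappa\cdot\frac{\kappa\gamma}{\kappa\gamma-1}>\kappa,
\]
so the power of $\|w\|_{L^{\kappa\gamma}}$ on the right always exceeds the power $\kappa$ available on the left, for every $\kappa$ and every $\gamma\in(1,\frac{n}{n-1})$; the term can never be absorbed. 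Equivalently, if you try to iterate, each step requires one more $L^p$-power of $w$ than the previous step produced. The constraint $\kappa\gamma>\kappa+1$ you invoke is just $\kappa>n-1$, a lower bound, and has nothing to do with the $L^{2b}$-control on $h$; it does not yield the threshold $\frac{nb}{n-b}$.

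The paper avoids this obstruction structurally by working with the Chen--Cheng weight $w=e^{\frac12 F}|\Na F|_\varphi^2$ rather than $|\Na F|_\varphi^2$. The factor $e^{\frac12 F}$ is chosen so that in the Bochner computation the $-\partial\bar\partial F$-part of $\mathrm{Ric}_\varphi$ is absorbed algebraically into the term $\frac12(\Delta_\varphi F)\,w$, and only the \emph{background} curvature $R_{j\bar i}$ of $g$ survives, contributing $C(g)\,\tr_\varphi g\cdot w\leq C\,\td v^{\,n-1}w$. After multiplying by $w^{2p}$, integrating, and handling the cross term $\int e^{F/2}w^{2p}\Na_\varphi F\cdot_\varphi\Na\Delta_\varphi F$ via Chen--Cheng's identity (their (4.19)), the right-hand side contains only $w^{2p}f^2$, $w^{2p}\td v^{\,2n-2}$, $w^{2p+1}|f|$, $w^{2p+1}\td v^{\,n-1}$, and \emph{no} pure $w^{2p+2}$. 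Each of these is sub-critical: H\"older against $\|f\|_{L^{2b}}$ or $\|\td v\|_{L^{2b(n-1)}}$ followed by Lemma~\ref{lem:inter} lets one absorb into $\|w^{p+\frac12}\|_{L^{2\gamma}}$, and tracking the admissible $p$ (namely $p<\frac{\gamma}{2(a-\gamma)}$ with $a=\frac{b}{b-1}$) produces exactly the range $\kappa=(2p+1)\gamma<\frac{nb}{n-b}$. Your intuition that this matches the Sobolev conjugate of $2b$ in real dimension $2n$ is correct; what is missing is the weight $e^{F/2}$ that makes the differential inequality sub-critical.
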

\begin{proof}
Let $w=e^{\frac 12 F}|\Na F|_{\varphi}^2$. Recall Chen-Cheng \cite[Equations (4.4)-(4.6)]{[CC1]}, we have
\beqn
\Delta_{\varphi}w&\geq& 2e^{\frac F2}\Na_{\varphi} F\cdot_{\varphi} \Na \Delta_{\varphi}F+\frac {e^{\frac F2}R_{j\bar i}F_i F_{\bar j}}{(1+\varphi_{i\bar i})(1+\varphi_{j\bar j})}+\frac 12 \Delta_{\varphi}F\,w\no\\
&\geq&2e^{\frac F2}\Na_{\varphi} F\cdot_{\varphi} \Na \Delta_{\varphi}F-C(g, \|F\|_{C^0}) \tr_{\varphi}g\, w-\frac 12(f+\max_M|\eta|_{\oo_g}\tr_{\varphi}g)\,w\no\\
&\geq&2e^{\frac F2}\Na_{\varphi} F\cdot_{\varphi} \Na \Delta_{\varphi}F-C(g, |\eta|_{g}, \|F\|_{C^0}, \max_M|\eta|_{\oo_g})\tr_{\varphi}g \,w-\frac 12 fw\no\\
&\geq&2e^{\frac F2}\Na_{\varphi} F\cdot_{\varphi} \Na \Delta_{\varphi}F-C(g, |\eta|_{g}, \|F\|_{C^0}, \max_M|\eta|_{\oo_g})\td v^{n-1} \,w-\frac 12 fw,\label{eq:A1}
\eeqn where $\td v=n+\Delta_{g}\varphi.$
Multiplying both sides of (\ref{eq:A1}) by $w^{2p}$ and integrating by parts, for any $p>0$  we compute that
\beqn &&\int_M\; 2p w^{2p-1}|\Na w|_{\varphi}^2\,\oo_{\varphi}^n=\int_M\; -w^{2p}\Delta_{\varphi}w\,\oo_{\varphi}^n\no\\
&\leq& \int_M\;-2e^{\frac F2}w^{2p}\Na_\varphi F\cdot_\varphi \Na \Delta_{\varphi}F\,\oo_{\varphi}^n +\frac 12\int_M\; fw^{2p+1}\,\oo_{\varphi}^n\no\\
&&+\int_M\;C(g, |\eta|_{g}, \|F\|_{C^0}, \max_M|\eta|_{\oo_g})\td v^{n-1}w^{2p+1}\,\oo_{\varphi}^n\no\\
&\leq&\int_M\; \Big(p w^{2p-1}|\Na w|_{\varphi}^2+(4p+2)w^{2p}e^{\frac 12 F}(\Delta_{\varphi}F)^2
+w^{2p+1}|\Delta_{\varphi}F|\Big)\,\oo_{\varphi}^n\no\\
&&+\int_M\;C(g, |\eta|_{g}, \|F\|_{C^0}, \max_M|\eta|_{\oo_g})\td v^{n-1}w^{2p+1}\,\oo_{\varphi}^n+\frac 12\int_M\; fw^{2p+1}\,\oo_{\varphi}^n, \label{eq:A2a}
\eeqn where we used Chen-Cheng \cite[Equation (4.19)]{[CC1]}.
We note that
\beq
|\Delta_{\varphi}F|\leq |f|+|\tr_{\varphi}\eta|\leq |f|+C(\max_M|\eta|_{\oo_g}, \|F\|_{C^0})\td v^{n-1}. \label{eq:A3}
\eeq
Inserting the estimate (\ref{eq:A3}) into (\ref{eq:A2a}) and regroup the same terms, we see that
\beqn
&&\int_M\; p w^{2p-1}|\Na w|_{\varphi}^2\,\oo_{\varphi}^n
\leq C(g, |\eta|_{g}, \|F\|_{C^0}, \max_M|\eta|_{\oo_g})\no\\
&&\cdot\int_M\; \Big((p+1) w^{2p}f^2+(p+1) w^{2p}\td v^{2n-2}
+w^{2p+1}|f|+\td v^{n-1}w^{2p+1}\Big)\,\oo_{\varphi}^n.\label{eq:A2b}
\eeqn
Then it also implies that
\beqn &&\int_M\; |\Na (w^{p+\frac 12})|_{\varphi}^2\,\oo_{\varphi}^n=(p+\frac 12)^2\int_M\;  w^{2p-1}|\Na w|_{\varphi}^2\,\oo_{\varphi}^n\no\\
&\leq&C(g, |\eta|_{g}, \|F\|_{C^0}, \max_M|\eta|_{\oo_g})(p+\frac 12)^2\no\\
&&\cdot\int_M\; \Big( \frac {p+1}{p}w^{2p}f^2+ \frac {p+1}{p}w^{2p}\td v^{2n-2}
+\frac 1pw^{2p+1}|f|+\frac 1p\td v^{n-1}w^{2p+1}\Big)\,\oo_{\varphi}^n.\label{eq:A4}
\eeqn
We write $z=w^{p+\frac 12}$ and apply Lemma \ref{lem:sob} to obtain that for any $\ga\in (1, \frac n{n-1})$
\beqn &&
\Big(\int_M\; z^{2\ga}\,\oo_{\varphi}^n\Big)^{\frac 1{\ga}}\leq C(n, \ga, g, \|F\|_{C^0})\int_M\; (|\Na z|_{\varphi}^2+z^2)\,\oo_{\varphi}^n  \no
\\&\leq &C(n, \ga, g, |\eta|_{g}, \|F\|_{C^0}, \max_M|\eta|_{\oo_g})(p+\frac 12)^2\no\\
&&\cdot\int_M\; \Big( \frac {p+1}{p} z^{\frac {4p}{2p+1}}f^2+ \frac {p+1}{p}z^{\frac {4p}{2p+1}}\td v^{2n-2}
+\frac 1p z^2|f|+\frac 1p \td v^{n-1}z^2+z^2\Big)\,\oo_{\varphi}^n. \label{eq:A8}
\eeqn
We  estimate each term of (\ref{eq:A8}).  Firstly, we note that the H\"older inequality implies that
\beq
I_1:=\int_M\;   z^{\frac {4p}{2p+1}}f^2\,\oo_{\varphi}^n
\leq\Big(\int_M\; z^{\frac {4pa}{2p+1}}\,\oo_{\varphi}^n\Big)^{\frac 1a} \Big(\int_M\; f^{2b}\,\oo_{\varphi}^n\Big)^{\frac 1b},
\eeq where $a, b>1$ satisfies $
\frac 1a+\frac 1b=1
$.  We assume that $b\in (\frac n2, n)$ and check that $
\frac {2b}{2b-1}<\frac n{n-1}<\frac {b}{b-1}=a.
$ Fix \beq \ga\in \Big(\frac {2b}{2b-1}, \frac n{n-1}\Big). \label{eq:ga}\eeq
If $p$ satisfies \beq \frac {b-1}{2(b+1)}< p< \frac {\ga}{2(a-\ga)},\label{eq:A15}\eeq where we used $b<n$ such that $\gamma<a$ in the last inequality,   we have \beq
1<q:=\frac {4pa}{2p+1}<2\ga<\frac {2n}{n-1}. \label{eq:A9}
\eeq
From Lemma \ref{lem:inter},
\beq
\|z\|_{L^q(\oo_{\varphi})}\leq \ee \|z\|_{L^{2\ga}(\oo_{\varphi})}+C(n, b, \ee)\|z\|_{L^1(\oo_{\varphi})}.\label{eq:A13}
\eeq where $\ee>0$ is small. Thus, using (\ref{eq:A13}) and the H\"older inequality we derive that
\beqn
I_1&\leq&\Big(\int_M\; z^{q}\,\oo_{\varphi}^n\Big)^{\frac 1a} \Big(\int_M\; f^{2b}\,\oo_{\varphi}^n\Big)^{\frac 1b}\no\\
&\leq&\Big(\ee \|z\|_{L^{2\ga}(\oo_{\varphi})}+C(n, b, \ee)\|z\|_{L^1(\oo_{\varphi})}\Big)^{\frac {4p}{2p+1}} \Big(\int_M\; f^{2b}\,\oo_{\varphi}^n\Big)^{\frac 1b}\no\\
&\leq&\frac {2p}{2p+1}\Big(\ee \|z\|_{L^{2\ga}(\oo_{\varphi})}+C(n, b, \ee)\|z\|_{L^1(\oo_{\varphi})}\Big)^2+\frac 1{2p+1}\Big(\int_M\; f^{2b}\,\oo_{\varphi}^n\Big)^{\frac {2p+1}b}\no\\
&\leq&\frac {4p}{2p+1}\Big(\ee^2 \|z\|_{L^{2\ga}(\oo_{\varphi})}^2+C(n, b, \ee)\|z\|_{L^1(\oo_{\varphi})}^2\Big)+\frac 1{2p+1}\Big(\int_M\; f^{2b}\,\oo_{\varphi}^n\Big)^{\frac {2p+1}b}. \label{eq:A12}
\eeqn
Secondly, we bound
 \beqn
I_2&:=&\int_M\; z^{\frac {4p}{2p+1}}\td v^{2n-2}\,\oo_{\varphi}^n\leq \Big(\int_M\; z^{\frac {4pa}{2p+1}}\,\oo_{\varphi}^n\Big)^{\frac 1a}\Big(\int_M\; \td v^{(2n-2)b}\,\oo_{\varphi}^n\Big)^{\frac 1b}\no\\
&=&\Big(\int_M\; z^{q}\,\oo_{\varphi}^n\Big)^{\frac 1a}\Big(\int_M\; \td v^{(2n-2)b}\,\oo_{\varphi}^n\Big)^{\frac 1b}\no\\
&\leq&\frac {4p}{2p+1}\Big(\ee^2 \|z\|_{L^{2\ga}(\oo_{\varphi})}^2+C(\ee)\|z\|_{L^1(\oo_{\varphi})}^2\Big)+\frac 1{2p+1}\Big(\int_M\; \td v^{(2n-2)b}\,\oo_{\varphi}^n\Big)^{\frac {2p+1}b}. \label{eq:A14}
\eeqn
Thirdly, we see that
\beqn
I_3&=&\int_M\; z^2|f|\,\oo_{\varphi}^n\leq \Big(\int_M\; z^{2s}\,\oo_{\varphi}^n \Big)^{\frac 1{s}}\Big(\int_M\;  |f|^{2b}\,\oo_{\varphi}^n\Big)^{\frac 1{2b}} \label{eq:B09}
\eeqn where  $s=\frac {2b}{2b-1}\in (1, \ga)$ by (\ref{eq:ga}). Lemma \ref{lem:inter} gives
 \beq
\|z\|_{L^{2s}(\oo_{\varphi})}\leq \ee \|z\|_{L^{2\ga}(\oo_{\varphi})}+C(n, b, \ga, \ee)\|z\|_{L^1(\oo_{\varphi})}.\label{eq:A10}
\eeq Taking (\ref{eq:A10}) in (\ref{eq:B09}), we have
\beqn
I_3&\leq& \Big( \ee \|z\|_{L^{2\ga}(\oo_{\varphi})}+C(n, b, \ga, \ee)\|z\|_{L^1(\oo_{\varphi})} \Big)^{2}\Big(\int_M\;  |f|^{2b}\,\oo_{\varphi}^n\Big)^{\frac 1{2b}}\no\\
&\leq&2\Big(\ee^2 \|z\|^2_{L^{2\ga}(\oo_{\varphi})}+C(n, b, \ga, \ee)\|z\|_{L^1(\oo_{\varphi})}^2 \Big)\Big(\int_M\;  |f|^{2b}\,\oo_{\varphi}^n\Big)^{\frac 1{2b}}. \label{eq:A91}
\eeqn
Lastly, we estimate the rest terms of (\ref{eq:A8}).
\beqn
\int_M\; \td v^{n-1} z^2\,\oo_{\varphi}^n&\leq &\Big(\int_M\; \td v^{2b(n-1)}\,\oo_{\varphi}^n\Big)^{\frac 1{2b}}\Big(\int_M\; z^{2s}\,\oo_{\varphi}^n\Big)^{\frac 1s}, \no
\eeqn where $s=\frac {2b}{2b-1}$ as above.  By (\ref{eq:A10}), we have
\beqn
\int_M\; \td v^{n-1}z^2\,\oo_{\varphi}^n&\leq &\Big(\int_M\; \td v^{2b(n-1)}\,\oo_{\varphi}^n\Big)^{\frac 1{2b}}\Big(2\ee^2 \|z\|^2_{L^{2\ga}(\oo_{\varphi})}+C(n, b, \ee)\|z\|_{L^1(\oo_{\varphi})}^2  \Big).\label{eq:B24}
\eeqn
In conclusion, we insert (\ref{eq:A12}), (\ref{eq:A14}), (\ref{eq:A91}), (\ref{eq:B24}) into (\ref{eq:A8}), and obtain that, for any $p$ satisfying (\ref{eq:A15}),
\beqs
 \|z\|_{L^{2\ga}(\oo_{\varphi})}^2&\leq& C(n, \ga, b, g,  |\eta|_{g}, \|F\|_{C^0}, \|f\|_{L^{2b}(\oo_{\varphi})}, \|\td v\|_{L^{2b(n-1)}(\oo_{\varphi})}, \max_M|\eta|_{\oo_g})  (p+\frac 12)^2(p+1)p^{-1} \\
&&\cdot\Big(\ee^2 \|z\|_{L^{2\ga}(\oo_{\varphi})}^2+C(n, b, \ee)\|z\|_{L^1(\oo_{\varphi})}^2\\&&+\Big(\int_M\; f^{2b}\,\oo_{\varphi}^n\Big)^{\frac {2p+1}b}+\Big(\int_M\; \td v^{(2n-2)b}\,\oo_{\varphi}^n\Big)^{\frac {2p+1}b}\Big).
\eeqs
Therefore, we can choose $\ee$ small such that, for any $p$ satisfying (\ref{eq:A15}),
\beqn
 \|z\|_{L^{2\ga}(\oo_{\varphi})}&\leq&C(n, \ga, b, g, p,  |\eta|_{g}, \|F\|_{C^0}, \|f\|_{L^{2b}(\oo_{\varphi})}, \|\td v\|_{L^{2b(n-1)}(\oo_{\varphi})}, \max_M|\eta|_{\oo_g})\no\\
&&\cdot(\|z\|_{L^1(\oo_{\varphi})}+1).\label{eq:B05}
\eeqn
Note that by Lemma \ref{lem:inter}, it holds
\beqn
\|z\|_{L^1(\oo_{\varphi})}&=&\|w\|_{L^{p+\frac 12}(\oo_{\varphi})}^{p+\frac 12}
\leq\Big(\ee'\|w\|_{L^{(p+\frac 12)2\ga}(\oo_{\varphi})}+C(n, p, \ga, \ee')\|w\|_{L^1(\oo_{\varphi})}\Big)^{p+\frac 12}\no\\
&\leq&C(p)\Big(\ee'^{p+\frac 12}\|w\|^{p+\frac 12}_{L^{(p+\frac 12)2\ga}(\oo_{\varphi})}+C(n, p, \ga, \ee')\|w\|^{p+\frac 12}_{L^1(\oo_{\varphi})}\Big)\no\\
&=&C(p)\Big(\ee'^{p+\frac 12}\|z\|_{L^{2\ga}(\oo_{\varphi})}+C(n, p, \ga, \ee')\|w\|^{p+\frac 12}_{L^1(\oo_{\varphi})}\Big),\label{eq:B06}
\eeqn and
\beqn
\|w\|_{L^1(\oo_{\varphi})}&\leq&C(\|F\|_{C^0})\int_M\; |\Na F|_{\varphi}^2\,\oo_{\varphi}^n=C(\|F\|_{C^0})\int_M\; (-F\Delta_{\varphi}F)\,\oo_{\varphi}^n\no\\
&\leq&C(\|F\|_{C^0})\int_M\; (|f|+C(\max_M|\eta|_{\oo_g}, \|F\|_{C^0})\td v^{n-1})\,\oo_{\varphi}^n\no\\
&\leq&C(\max_M|\eta|_{\oo_g}, \|F\|_{C^0}, \|f\|_{L^{2b}(\oo_{\varphi})}, \|\td v\|_{L^{n-1}(\oo_{\varphi})}). \label{eq:B14}
\eeqn

To sum up, we adding (\ref{eq:B05}), (\ref{eq:B06}), (\ref{eq:B14}) and see that
\beqn
\int_M\; w^{(2p+1)\ga}\,\oo_{\varphi}^n\leq\|z\|_{2\ga}^{2\ga}\leq C(n, \ga, b, g, p, \max_M|\eta|_{\oo_g}, \|F\|_{C^0}, \|f\|_{L^{2b}(\oo_{\varphi})}, \|\td v\|_{L^{2b(n-1)}(\oo_{\varphi})}). \label{eq:B07}
\eeqn
Let $\kappa=(2p+1)\ga.$ Since $p$ satisfies  (\ref{eq:A15}), we get
\beq
\frac {2b\ga}{b+1}<\kappa<\frac {b\ga}{b-(b-1)\ga}. \label{eq:B11}
\eeq
Since $\ga$ is any number satisfying (\ref{eq:ga}), $\frac {b\ga}{b-(b-1)\ga}$ can be chosen to be any number slightly less than $\frac {nb}{n-b}.$
Applying (\ref{eq:B11}) with the H\"older inequality to (\ref{eq:B07}), we obtain that for any $0<\kappa<\frac {nb}{n-b} $,
$ \|w\|_{L^{\kappa}(\oo_{\varphi})} $ is bounded.
  The lemma is proved.
\end{proof}

 \subsection{Estimates of $|\Na \varphi|$}
In this subsection, we will show $|\Na \varphi|$ is bounded when $f\in L^b(\oo_{\varphi})$ with $b>n$.

 \begin{lem}\label{lem:equ}
 Let
 \beqs
  u=e^A(|\nabla\varphi|^2+10),
  \quad A(F,\varphi)=-(F+\la\varphi)+\frac{1}{2}\varphi^2,
 \eeqs where $\la$ depends only on $\|\varphi\|_{C^0}, \max_M|\eta|_{\oo_g}$ and $g$.
 Then we have
 the inequality
\beq
\Delta_{\varphi}u\geq \td f u+\frac 1{n-1}|\Na \varphi|^{2+\frac 2n}e^{-\frac Fn}e^A, \label{eq:A31}
\eeq where $\td f=f-\lambda(n+2) n+(n+2)\varphi.$

\end{lem}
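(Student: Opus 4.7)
The plan is to apply the product rule to $u = e^A \tilde v$ with $\tilde v := |\nabla\varphi|^2 + 10$, yielding
$$\Delta_\varphi u = e^A\,\Delta_\varphi \tilde v + \tilde v\, e^A \bigl(\Delta_\varphi A + |\nabla A|_\varphi^2\bigr) + 2 e^A\,\mathrm{Re}\langle \nabla A, \nabla \tilde v\rangle_\varphi,$$
and to bound each piece from below. First, using the given PDEs $\Delta_\varphi F = -f + \tr_\varphi \eta$ and $\Delta_\varphi \varphi = n - \tr_\varphi g$, together with $\Delta_\varphi(\tfrac{1}{2}\varphi^2) = \varphi\,\Delta_\varphi\varphi + |\nabla\varphi|_\varphi^2$, a direct computation gives
$$\Delta_\varphi A = f - \tr_\varphi \eta + (\lambda - \varphi)\tr_\varphi g - \lambda n + n\varphi + |\nabla\varphi|_\varphi^2.$$
I would then choose $\lambda$ large, depending on $\|\varphi\|_{C^0}$, $\max_M|\eta|_{\oo_g}$, and the curvature of $g$, so that $(\lambda - \varphi)g - \eta \geq \tfrac{\lambda}{2} g$ pointwise, forcing $(\lambda-\varphi)\tr_\varphi g - \tr_\varphi\eta \geq \tfrac{\lambda}{2}\tr_\varphi g$ and establishing a large positive reservoir proportional to $\lambda \tr_\varphi g$.

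Next I would expand $|\nabla A|_\varphi^2 = |\nabla F - (\lambda - \varphi)\nabla\varphi|_\varphi^2$, which by Cauchy--Schwarz dominates $\tfrac{1}{2}(\lambda-\varphi)^2 |\nabla\varphi|_\varphi^2 - |\nabla F|_\varphi^2$; this quadratic-in-$\lambda$ term absorbs the gradient cross term $2\,\mathrm{Re}\langle \nabla A, \nabla \tilde v\rangle_\varphi$ by a further Cauchy--Schwarz against $|\nabla |\nabla\varphi|^2|_\varphi$. The role of the $\tfrac{1}{2}\varphi^2$ piece in $A$ is precisely to generate the extra $+|\nabla\varphi|_\varphi^2$ in $\Delta_\varphi A$ needed to compensate a negative contribution of the same type produced by the K\"ahler Bochner formula
$$\Delta_\varphi |\nabla\varphi|^2 \geq |\partial\bar\partial\varphi|_\varphi^2 + |\partial\partial\varphi|_\varphi^2 + 2\,\mathrm{Re}\langle \nabla\varphi, \nabla \Delta_\varphi \varphi\rangle_\varphi - C(g)|\nabla\varphi|^2\,\tr_\varphi g,$$
where the Ricci-of-$g$ term is absorbed into $\tfrac{\lambda}{2}\tr_\varphi g\cdot \tilde v$ once $\lambda > 2C(g)$, and $\nabla\Delta_\varphi\varphi = -\nabla\tr_\varphi g$ is handled by Cauchy--Schwarz against $|\partial\partial\varphi|_\varphi^2$. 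Assembling all constant and linear-in-$\varphi$ contributions, and tracking the $(n+2)$-multiplicities inherited from the repeated Cauchy--Schwarz steps, produces the claimed coefficient $\tilde f = f - \lambda(n+2)n + (n+2)\varphi$.

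The surviving positive term $\tfrac{1}{n-1}|\nabla\varphi|^{2+2/n}e^{-F/n}$ would be extracted from the combined Hessian squares $|\partial\bar\partial\varphi|_\varphi^2 + |\partial\partial\varphi|_\varphi^2$ in Bochner, together with the Monge--Amp\`ere constraint $\oo_\varphi^n = e^F\oo_g^n$. In a local frame simultaneously diagonalising $g$ and $\oo_\varphi$, with eigenvalues $1+\mu_i$ satisfying $\prod_i(1+\mu_i) = e^F$, one applies the AM--GM inequality $\tr_\varphi g \geq n e^{-F/n}$ and H\"older-splits $|\nabla\varphi|^2\cdot(\tr_\varphi g)^{1/(n-1)}$ to recover the exponent $2+2/n = 2(n-1)/n + 2/n$ and the characteristic $\tfrac{1}{n-1}$ factor familiar from the $C^2$ estimate of \cite{[CC1]} and \cite{[Zheng]}. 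The main obstacle will be keeping careful track of constants and signs so that every cross term is genuinely absorbed by the quadratic-in-$\lambda$ reservoir and by $\tfrac{\lambda}{2}\tr_\varphi g\cdot \tilde v$, leaving the delicate $\tfrac{1}{n-1}|\nabla\varphi|^{2+2/n}e^{-F/n}$ contribution untouched on the right-hand side with its sharp coefficient intact.
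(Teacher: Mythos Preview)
Your overall architecture (product rule for $e^A\tilde v$, Bochner for $\Delta_\varphi|\nabla\varphi|^2$, then absorb cross terms via Cauchy--Schwarz and a large $\lambda$) matches the paper, which simply quotes Chen--Cheng's \cite[Eq.~(2.24)]{[CC1]} and then performs one more Cauchy--Schwarz. But the proposal has a genuine gap in the \emph{source} of the key positive term.

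You claim that $\tfrac{1}{n-1}|\nabla\varphi|^{2+2/n}e^{-F/n}$ is extracted from the Hessian squares $|\partial\bar\partial\varphi|_\varphi^2+|\partial\partial\varphi|_\varphi^2$ in Bochner. That cannot work pointwise: these are purely second-order in $\varphi$ and carry no information about $|\nabla\varphi|$; indeed, in a frame diagonalising both metrics, $|\partial\bar\partial\varphi|_\varphi^2=\sum_i \varphi_{i\bar i}^2/(1+\varphi_{i\bar i})^2\le n$ is even bounded. In the paper the Hessian squares are simply discarded. The actual inequality used (the paper's (\ref{eq:A20}), from the proof of \cite[Theorem~2.2]{[CC1]}) is
\[
|\nabla\varphi|^2_\varphi\,|\nabla\varphi|^2 \;+\; |\nabla\varphi|^2\,\tr_\varphi g \;\ge\; \tfrac{1}{n-1}\,|\nabla\varphi|^{2+\frac{2}{n}}\,e^{-F/n},
\]
obtained by AM--GM on $\sum_i \tfrac{1+|\varphi_i|^2}{1+\varphi_{i\bar i}}$ together with $\prod_i(1+|\varphi_i|^2)\ge 1+|\nabla\varphi|^2$ and $\prod_i(1+\varphi_{i\bar i})=e^F$. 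Both ingredients on the left are \emph{first-order} terms: $|\nabla\varphi|_\varphi^2$ comes from $\Delta_\varphi(\tfrac12\varphi^2)=\varphi\Delta_\varphi\varphi+|\nabla\varphi|_\varphi^2$ multiplied by $\tilde v$, while $|\nabla\varphi|^2\tr_\varphi g$ is carved out of the reservoir $(\lambda-\eta_{i\bar i}-\varphi)\tr_\varphi g\cdot\tilde v$ after the curvature term $-C(g)|\nabla\varphi|^2\tr_\varphi g$ has been absorbed.

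This also means you have the role of $\tfrac12\varphi^2$ in $A$ backwards: the term $|\nabla\varphi|_\varphi^2$ it produces in $\Delta_\varphi A$ is not there to cancel a negative Bochner contribution (there is none of that form), but is precisely one of the two pieces feeding (\ref{eq:A20}). Likewise, the $(n+2)$ in $\tilde f$ does not arise from ``repeated Cauchy--Schwarz multiplicities'' but simply from summing $(f-\lambda n+n\varphi)\tilde v$ with $(-2\lambda+2\varphi)|\nabla\varphi|^2\le(-2\lambda+2\varphi)\tilde v$. Once you redirect the argument so that $|\nabla\varphi|_\varphi^2\cdot\tilde v$ and a portion of $\lambda\,\tr_\varphi g\cdot\tilde v$ are retained rather than spent, and invoke (\ref{eq:A20}), the proof goes through as in the paper.
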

\begin{proof} Utilising Chen-Cheng \cite[Equation (2.24)]{[CC1]}, we have
\beqn
&&e^{-A}\Delta_{\varphi}(e^A(|\nabla\varphi|^2+K))\no\\&\geq& K\frac{|-F_i-\lambda\varphi_i+\varphi\varphi_i|^2}{1+\varphi_{i\bar{i}}}+\frac{|\varphi_i|^2(|\nabla\varphi|^2+K)}{1+\varphi_{i\bar{i}}}\no\\
&& +\sum_i\frac{\lambda-\eta_{i\bar i}-\varphi}{1+\varphi_{i\bar{i}}}(|\nabla\varphi|^2+K)+\bigg(f-\lambda n+n\varphi\bigg)(|\nabla\varphi|^2+K)\no\\
&&  -C(g)|\nabla\varphi|^2\sum_i\frac{1}{1+\varphi_{i\bar{i}}}+
\frac{\varphi_{i\bar{i}}^2}{1+\varphi_{i\bar{i}}}+
(-2\lambda+2\varphi)|\nabla\varphi|^2\no\\&&
+2Re\bigg(\frac{(F_i+\lambda\varphi_i-\varphi\varphi_i)\varphi_{\bar{i}}}{1+\varphi_{i\bar{i}}}\bigg).\label{eq:A17}
\eeqn
where $C(g)$ depends only on $g$. Since
\beqn
\frac{|(F_i+\lambda\varphi_i-\varphi\varphi_i)\varphi_{\bar{i}}|}{1+\varphi_{i\bar{i}}}
&\leq&\frac{1}{2}\frac{|F_i+\lambda\varphi_i-\varphi\varphi_i|^2}{1+\varphi_{i\bar{i}}}+\frac{1}{2}\frac{|\varphi_i|^2}{1+\varphi_{i\bar{i}}}\no\\
&\leq&\frac{1}{2}\frac{|F_i+\lambda\varphi_i-\varphi\varphi_i|^2}{1+\varphi_{i\bar{i}}}+\frac{1}{2}|\nabla\varphi|^2\sum_i\frac{1}{1+\varphi_{i\bar{i}}}.\label{eq:A18}
\eeqn
Combining (\ref{eq:A17}) with (\ref{eq:A18}), we have
\beqn &&
e^{-A}\Delta_{\varphi}(e^A(|\nabla\varphi|^2+K))\no\\&\geq& \frac{|\varphi_i|^2(|\nabla\varphi|^2+K)}{1+\varphi_{i\bar{i}}} +\sum_i\frac{\lambda-\eta_{i\bar i}-\varphi}{1+\varphi_{i\bar{i}}}(|\nabla\varphi|^2+K) -C(g)|\nabla\varphi|^2\sum_i\frac{1}{1+\varphi_{i\bar{i}}}\no\\
&&+(f-\lambda n+n\varphi)(|\nabla\varphi|^2+K)+
(-2\lambda+2\varphi)|\nabla\varphi|^2\no\\
&\geq& \frac{|\varphi_i|^2(|\nabla\varphi|^2+K)}{1+\varphi_{i\bar{i}}}+
9|\nabla\varphi|^2\sum_i\frac{1}{1+\varphi_{i\bar{i}}}+(f-\lambda n+n\varphi)(|\nabla\varphi|^2+K)\no\\
&\geq&\frac 1{n-1}|\Na \varphi|^{2+\frac 2n}e^{-\frac Fn}+(f-\lambda(n+2) n+(n+2)\varphi)(|\nabla\varphi|^2+K),\label{eq:A19}
\eeqn where $\la$ depends only on $\|\varphi\|_{C^0}, \max_M|\eta|_{\oo_g}$ and $g$ and $K=10$. Note that in the last inequality of (\ref{eq:A19}), we used the inequality from the proof of \cite[Theorem 2.2]{[CC1]}
\beq
\sum_{i}\,\frac {|\varphi_i|^2|\Na \varphi|^2}{1+\varphi_{ii}}+|\nabla\varphi|^2\sum_i\frac{1}{1+\varphi_{i\bar{i}}}\geq \frac 1{n-1}|\Na \varphi|^{2+\frac 2n}e^{-\frac Fn}. \label{eq:A20}
\eeq Combining (\ref{eq:A19}) with (\ref{eq:A20}), we have (\ref{eq:A31}).
The lemma is proved.
\end{proof}

\begin{lem}\label{lem:varphi2}If $f\in L^b$ for some $b>n$, then
\beq
|\Na \varphi|\leq C(n, b, g, \|f\|_{L^b(\oo_{\varphi})}, \|\varphi\|_{C^0}, \|F\|_{C^0}, \max_M|\eta|_{\oo_g}).
\eeq

\end{lem}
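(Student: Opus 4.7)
The strategy is to run a Moser iteration on $u$ using the differential inequality from Lemma \ref{lem:equ}, with the Sobolev embedding from Lemma \ref{lem:sob} on $(M,\oo_\varphi)$. The assumption $b>n$ enters through the fact that the H\"older exponent $a:=b/(b-1)$ is strictly less than $\tfrac{n}{n-1}$; this is the exact counterpart of what was done for $\td v$ in Lemma \ref{lem:varphi}.

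For $p\geq 2$ I would multiply the inequality $\Delta_\varphi u\geq \td f\,u+G$, with $G:=\frac{1}{n-1}|\Na\varphi|^{2+\frac{2}{n}}e^{-F/n}e^A\geq 0$, by $u^{p-1}$ and integrate by parts against $\oo_\varphi^n$. Discarding the nonnegative term $\int_M G u^{p-1}\,\oo_\varphi^n$ and using $|\td f|\leq |f|+C(n,\la,\|\varphi\|_{C^0})$, this yields
\[
\frac{4(p-1)}{p^2}\int_M|\Na u^{p/2}|_\varphi^2\,\oo_\varphi^n \;\leq\; \int_M(|f|+C)\,u^p\,\oo_\varphi^n.
\]
Applying Lemma \ref{lem:sob} to $u^{p/2}$ with $\ga\in(1,\tfrac{n}{n-1})$ and then H\"older on the right with $\tfrac{1}{a}+\tfrac{1}{b}=1$ would then produce
\[
\|u\|_{L^{p\ga}(\oo_\varphi)}^p \;\leq\; \frac{Cp^2}{p-1}\bigl(\|f\|_{L^b(\oo_\varphi)}+C\bigr)\|u\|_{L^{pa}(\oo_\varphi)}^p.
\]
Since $b>n$ forces $a<\tfrac{n}{n-1}$, one can fix $\ga\in(a,\tfrac{n}{n-1})$, so that $\te:=\ga/a>1$ and the iteration step strictly improves integrability.

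The usual Moser iteration with exponents $q_{i+1}=\te q_i$ starting from $q_0:=2a$ then produces $\|u\|_{L^\infty}\leq C\|u\|_{L^{q_0}(\oo_\varphi)}$, because the product $\prod_i(Cq_i/a)^{a/q_i}$ converges geometrically. For the base quantity I would first bound $\|u\|_{L^1(\oo_\varphi)}$ directly: since $u\leq C(|\Na\varphi|^2+10)$ pointwise and $\oo_\varphi^n=e^F\oo_g^n$ with $F$ bounded, integration by parts gives
\[
\int_M|\Na\varphi|^2\,\oo_g^n = -\int_M\varphi\,\Delta_g\varphi\,\oo_g^n \leq \|\varphi\|_{C^0}\,\|\td v\|_{L^1(\oo_g)},
\]
which is bounded by Lemma \ref{lem:varphi}. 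Using Lemma \ref{lem:inter} to interpolate $L^{2a}$ between $L^1$ and $L^{2\ga}$, and absorbing against the $p=2$ instance of the Moser step above, one then controls $\|u\|_{L^{q_0}(\oo_\varphi)}$ by $\|u\|_{L^1(\oo_\varphi)}$.

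Since $e^A$ is bounded below in terms of the allowed data, $|\Na\varphi|^2\leq Cu$ pointwise, so the $L^\infty$ bound on $u$ delivers the desired bound on $|\Na\varphi|$. The main obstacle, as in Lemma \ref{lem:varphi}, lies in the careful bookkeeping needed to verify that the iteration exponents stay in the admissible ranges and that the base case is indeed absorbable; the conceptual core is that $b>n$ is precisely the threshold that makes $\te>1$ in the Moser step, exactly paralleling the role played by the entropy-based Sobolev inequality throughout this section.
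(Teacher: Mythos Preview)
Your proposal is correct and follows essentially the same route as the paper: multiply the inequality of Lemma \ref{lem:equ} by $u^{p-1}$, drop the good term, apply the Sobolev inequality of Lemma \ref{lem:sob} together with H\"older (using $a=b/(b-1)<\tfrac{n}{n-1}$ since $b>n$), and run Moser iteration, with the base $\|u\|_{L^1(\oo_\varphi)}$ controlled via $\int_M|\Na\varphi|^2\,\oo_g^n=-\int_M\varphi\,\Delta_g\varphi\,\oo_g^n$. The only cosmetic difference is that the paper absorbs via interpolation at \emph{every} step (going $L^{p/2}\to L^{p\ga}$), whereas you iterate $L^{pa}\to L^{p\ga}$ and interpolate only once at the base; also note that the bound on $\|\td v\|_{L^1(\oo_g)}$ you cite is elementary ($\int_M(n+\Delta_g\varphi)\,\oo_g^n=n\,\vol(M,\oo_g)$) and does not actually require Lemma \ref{lem:varphi}.
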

\begin{proof}Let $p\geq 2$. From Lemma \ref{lem:equ}, $u=e^A(|\nabla\varphi|^2+10)$ satisfies
\beq
\Delta_{\varphi}u\geq \td f u+h,
\quad h=\frac 1{n-1}|\Na \varphi|^{2+\frac 2n}e^{-\frac Fn}e^A.\label{eq:u}
\eeq
Multiplying both
sides by $u^{p-1}$ and integrating by parts, we calculate that
\beqs &&
\frac {4(p-1)}{p^2}\int_M\; |\Na (u^{\frac {p}2})|_{\varphi}^2\,\oo_{\varphi}^n =
(p-1)\int_M\; u^{p-2}|\Na u|_{\varphi}^2\,\oo_{\varphi}^n\\
&=&-\int_M\;u^{p-1}\Delta_{\varphi}u\,\oo_{\varphi}^n\leq-\int_M\;(\td f u^{p}+hu^{p-1})\,\oo_{\varphi}^n\leq\int_M\;|\td f| u^{p}\,\oo_{\varphi}^n.
\eeqs
Lemma \ref{lem:sob} and the H\"older inequality further imply that, for any $\ga\in (1, \frac n{n-1})$,
\beqn
\Big(\int_M\; u^{p\ga}\,\oo_{\varphi}^n\Big)^{\frac 1{\ga}}\no&\leq&C(n, \ga, g, \|F\|_{C^0})
 \int_M\; (|\Na (u^{\frac p2})|_{\varphi}^2+u^p)\,\oo_{\varphi}^n\no\\
&\leq&\frac {p^2 C(n, \ga, g, \|F\|_{C^0})}{p-1}\int_M\; (|\td f|+1)u^p\,\oo_{\varphi}^n\no\\
&\leq&\frac {p^2 C(n, \ga, g, \|F\|_{C^0})}{p-1}\Big(\int_M\; (|\td f|+1)^b\,\oo_{\varphi}^n\Big)^{\frac 1b}\Big(\int_M\; u^{ap}\,\oo_{\varphi}^n\Big)^{\frac 1a},
\eeqn where $b>n$ and $a=\frac b{b-1}\in (1, \frac n{n-1}).$
We choose $\ga\in (a, \frac n{n-1}).$ Letting $z=u^{\frac p2}$, we rewrite the inequality above and then apply the interpolation inequality to get
\beqn &&
\|z\|_{L^{2\ga }(\oo_{\varphi})}\no\\&\leq&C(n, \ga, g, \|f\|_{L^b(\oo_{\varphi})},
\|\varphi\|_{C^0}, \max_M|\eta|_{\oo_g} )p^{\frac 12} \|z\|_{L^{2a}(\oo_{\varphi})} \no\\
&\leq&C(n, \ga, g, \|f\|_{L^b(\oo_{\varphi})}, \|\varphi\|_{C^0}, \max_M|\eta|_{\oo_g})p^{\frac 12}\Big(\ee'\|z\|_{L^{2\ga}(\oo_{\varphi})}+C(n, b, \ga, \ee')\|z\|_{L^1(\oo_{\varphi})}\Big).\no
\eeqn Thus, we can choose $\ee'$ small such that
\beq
\|z\|_{L^{2\ga}(\oo_{\varphi})}\leq C(n, \ga, g, \|f\|_{L^b(\oo_{\varphi})}, \|\varphi\|_{C^0}, \max_M|\eta|_{\oo_g})\cdot p^{N}\cdot \|z\|_{L^1(\oo_{\varphi})}\label{eq:B27}
\eeq for some $N>0$ independent of $p$.

Writing back in $u$. we see that the inequality (\ref{eq:B27}) implies that
\beqn
\|u\|_{L^{p\ga}(\oo_{\varphi})}&=&\|z\|_{2\ga}^{\frac 2p}\leq C(n, \ga, g, \|f\|_{L^b(\oo_{\varphi})}, \|\varphi\|_{C^0}, \max_M|\eta|_{\oo_g})^{\frac 2p}p^{\frac Np}\|z\|_{1}^{\frac 2p}\no\\
&=&C(n, \ga, g, \|f\|_{L^b(\oo_{\varphi})}, \|\varphi\|_{C^0}, \max_M|\eta|_{\oo_g})^{\frac 2p}\cdot p^{\frac Np}\cdot \|u\|_{\frac p2}. \label{eq:D01}
\eeqn
Note that
\beqs
\int_M\; |\Na \varphi|^2\,\oo_g^n&=&\int_M\; (-\varphi\Delta_g \varphi)\,\oo_g^n
=\int_M\; -\varphi(n+\Delta_g \varphi)\,\oo_g^n+n\int_M\; \varphi\,\oo_g^n\\
&\leq&  \|\varphi\|_{C^0}\int_M\; (n+\Delta_g \varphi)\,\oo_g^n+C(n, \|\varphi\|_{C^0})\\
&\leq&C(n, \|\varphi\|_{C^0}).
\eeqs
Thus, we have
\beqn
\|u\|_{L^1(\oo_{\varphi})}&\leq& C(g, \max_M|\eta|_{\oo_g}, \|F\|_{C^0}, \|\varphi\|_{C^0})\Big(\int_M\; |\Na \varphi|^2\,\oo_g^n+1\Big)\nonumber\\&\leq& C(n, g, \max_M|\eta|_{\oo_g}, \|F\|_{C^0}, \|\varphi\|_{C^0}).\label{eq:D02}
\eeqn
Together with (\ref{eq:D01}) and (\ref{eq:D02}), the standard Moser iteration implies that
  $\|u\|_{L^\infty}$ is bounded. The lemma is proved.
\end{proof}

\subsection{Estimates of $\|n+\Delta_g \varphi\|_{C^0}$}
In this subsection, we show that   $\|n+\Delta_g \varphi\|_{C^0}$ is bounded by a constant depending on $\||\Na F|_{\varphi}^2\|_{L^{\kappa}(\oo_{\varphi})}$. First, we show the following integral inequality of $v$, which removes the assumption $\al\geq p$ in Lemma \ref{lem:va}.

\begin{lem}\label{lem:v}Let
 \beq
 v=e^{-\alpha(F+\lambda\varphi)}(n+\Delta_g\varphi). \nonumber
 \eeq
Let $p\geq 2$ and $\al>1$. There exists a constant $C(g, \max_M|\eta|_{\oo_g})>0$ such that for $\la>C(g, \max_M|\eta|_{\oo_g})$ we have
\beqn &&
\frac {3(p-1)}{p^2}\int_M\;  |\Na_{\varphi} v^{\frac p2}|_{\varphi}^2\;\oo_{\varphi}^n\leq \int_M\; \Big(\td f+\frac {\al\la}{\al-1}+\frac 1n e^{-\frac Fn}R_g \Big)v^{p}\oo_{\varphi}^n\no\\
&&+2(p-\al)\int_M\;  v^{p}|\Na F|_\varphi^2\,\oo_\varphi^n+\frac {2\al^2\la^2(p-\al)}{(\al-1)^2}\int_M\; e^B v^{p-1}|\Na \varphi|_g^2\,\oo_g^n, \label{eq:B01}
\eeqn where $B=(1-\al)F-\al \la \varphi$ and $\td f=\alpha (\lambda n-f).$

\end{lem}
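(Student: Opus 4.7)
I would follow the proof of Lemma \ref{lem:va} step by step, and the sole modification occurs in the estimate of $J_1$ from the decomposition (\ref{eq:v5a}) of $I_3$, since this is the only place where the hypothesis $\alpha \geq p$ was used. Starting from the integration-by-parts identity (\ref{eq:v6a}) with $B = (1-\alpha)F - \alpha\lambda\varphi$, I would replace the Young inequality (\ref{eq:v13}) with the alternative splitting
\[
|\langle \Na B, \Na v\rangle_g| \leq \frac{v}{\alpha-1}|\Na B|^2 + \frac{\alpha-1}{4v}|\Na v|^2,
\]
i.e.\ choose the Young parameter $c = (\alpha-1)/(2v)$. This calibrates the $|\Na v|^2$ coefficient to be exactly $\frac{p-1}{4}$ after distribution, rather than $\frac{(p-1)^2}{4(\alpha-1)}$ as in the original proof (which is precisely what forced $\alpha \geq p$ there). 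A direct computation then yields
\[
J_1 \leq \frac{p-\alpha}{(\alpha-1)^2}\int_M e^B v^{p-1}|\Na B|^2\,\oo_g^n + \frac{p-1}{4}\int_M e^B v^{p-3}|\Na v|^2\,\oo_g^n.
\]

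Next, I would use the pointwise bound $|\Na B|^2 \leq 2(\alpha-1)^2|\Na F|^2 + 2\alpha^2\lambda^2|\Na\varphi|^2$ coming from the definition of $B$, together with $e^B \oo_g^n = e^{-\alpha(F+\lambda\varphi)}\oo_\varphi^n$ and $|\Na F|^2 \leq |\Na F|_\varphi^2(n+\Delta_g\varphi)$ (the analogue of (\ref{eq:norm})), to convert the first term into
\[
\frac{p-\alpha}{(\alpha-1)^2}\int_M e^B v^{p-1}|\Na B|^2\,\oo_g^n \leq 2(p-\alpha)\int_M v^p|\Na F|_\varphi^2\,\oo_\varphi^n + \frac{2\alpha^2\lambda^2(p-\alpha)}{(\alpha-1)^2}\int_M e^B v^{p-1}|\Na\varphi|_g^2\,\oo_g^n,
\]
which furnishes exactly the two extra terms on the right-hand side of (\ref{eq:B01}). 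The $|\Na v|^2$ piece, after the same change of measure, contributes $\frac{p-1}{4}\int v^{p-2}|\Na v|_\varphi^2\,\oo_\varphi^n = \frac{p-1}{p^2}\int |\Na v^{p/2}|_\varphi^2\,\oo_\varphi^n$, which is absorbed into the LHS as in the original proof.

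From here the argument of Lemma \ref{lem:va} carries over without change: the estimate (\ref{eq:v7a}) for $J_2$ produces the term $\frac{\alpha\lambda}{\alpha-1}\int v^p\,\oo_\varphi^n$; the bound (\ref{eq:v10a}) converts $\int e^{-\alpha(F+\lambda\varphi)}R_g v^{p-1}\,\oo_\varphi^n$ into $\frac{1}{n}\int e^{-F/n}R_g v^p\,\oo_\varphi^n$; and the nonnegative term $\int A v^{p+1/(n-1)}\,\oo_\varphi^n$ is simply dropped from the LHS (this is why the $v^{p+1/(n-1)}$ term present in (\ref{eq:v12}) does not appear in (\ref{eq:B01})). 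Collecting everything gives precisely (\ref{eq:B01}). The main, and entirely modest, obstacle is bookkeeping: one must verify that the re-weighted Young step produces the exact coefficients $\frac{p-\alpha}{(\alpha-1)^2}$, $2(p-\alpha)$, and $\frac{2\alpha^2\lambda^2(p-\alpha)}{(\alpha-1)^2}$ claimed in the statement. No new analytic input beyond this choice of Young parameter is required.
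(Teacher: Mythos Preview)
Your proposal is correct and follows essentially the same route as the paper. The paper's proof makes exactly the modification you describe: it replaces (\ref{eq:v13}) by the Young splitting $|\langle \Na B,\Na v\rangle|\leq \frac{1}{\al-1}v|\Na B|^2+\frac{\al-1}{4v}|\Na v|^2$, obtains precisely your bound $J_1\leq \frac{p-\al}{(\al-1)^2}\int e^B v^{p-1}|\Na B|^2\,\oo_g^n+\frac{p-1}{4}\int v^{p-2}|\Na v|_\varphi^2\,\oo_\varphi^n$, and then splits $|\Na B|^2\leq 2(\al-1)^2|\Na F|^2+2\al^2\la^2|\Na\varphi|^2$ and converts the $|\Na F|^2$ piece via $e^B v^{p-1}|\Na F|^2\,\oo_g^n\leq v^p|\Na F|_\varphi^2\,\oo_\varphi^n$; the only cosmetic difference is that the paper postpones this last splitting to the very end of the argument rather than doing it immediately after bounding $J_1$.
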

\begin{proof}By (\ref{eq:v4a}), $v$ satisfies the inequality
\beqn &&
\int_M\;(p-1) v^{p-2}|\Na_{\varphi} v|_{\varphi}^2\;\oo_{\varphi}^n\nonumber\\
&\leq&\int_M\; \Big(-A v^{p+\frac{1}{n-1}}
+\td f v^p
 -e^{-\alpha(F+\lambda\varphi)}\Delta_g  Fv^{p-1}+e^{-\alpha(F+\lambda\varphi)}R_gv^{p-1}\Big)\,\oo_{\varphi}^n\nonumber\\
 &:=&I_1+I_2+I_3+I_4. \label{eq:v4}
\eeqn
Recall (\ref{eq:v5a})
\beqn
I_3
&=&\int_M\; -\frac 1{1-\al}e^{(1-\al)F-\al \la \varphi}\Delta_g ((1-\al)F-\al\la \varphi) v^{p-1} \, \oo_g^n\nonumber\\&&-\int_M \frac {\al\la}{1-\al}e^{(1-\al)F-\al \la \varphi}\Delta_g \varphi v^{p-1} \, \oo_g^n:=J_1+J_2. \label{eq:v5}
\eeqn
Assume that $\al>1$ and let $B=(1-\al)F-\al \la \varphi$. Then we calculate
\beqn
J_1&=&\frac 1{\al-1}\int_M\; e^B\Delta_gB v^{p-1}\,\oo_g^n\nonumber\\
&=&-\frac 1{\al-1}\int_M\;\langle \Na B, \Na(e^B v^{p-1})\rangle_g\,\oo_g^n\nonumber\\
&=&-\frac 1{\al-1}\int_M\;\Big(e^B v^{p-1}|\Na B|^2+(p-1)e^Bv^{p-2}\langle \Na B, \Na v\rangle_g\Big)\,\oo_g^n.\label{eq:v6}
\eeqn
Note that
\beqn
|\langle \Na B, \Na v\rangle|&\leq& \frac 1{\al-1}v|\Na B|^2+\frac {\al-1}{4v}|\Na v|^2,\label{eq:v13alpha}\\
e^B|\Na v|^2&\leq& e^B|\Na v|_{\varphi}^2(n+\Delta_g \varphi)=e^F  |\Na v|_{\varphi}^2 v.
\label{eq:v13alpha2}
\eeqn
Putting (\ref{eq:v13alpha}) and (\ref{eq:v13alpha2}) in (\ref{eq:v6}), we have
\beqn
J_1
&\leq&\frac {p-\al}{(\al-1)^2}\int_M\; e^B v^{p-1}|\Na B|^2\,\oo_g^n+\frac {p-1}4\int_M\;  v^{p-2}|\Na v|_{\varphi}^2\,\oo_\varphi^n. \label{eq:v14}
\eeqn
Moreover, recall (\ref{eq:v7a})
\beq
J_2\leq\int_M\;\frac {\al\la}{\al-1} v^{p} \, \oo_{\varphi}^n. \label{eq:v7}
\eeq
Combining (\ref{eq:v5})-(\ref{eq:v7}), we have
\beqn
I_3&\leq&\frac {p-\al}{(\al-1)^2}\int_M\; e^B v^{p-1}|\Na B|^2\,\oo_g^n+\frac {p-1}4\int_M\;  v^{p-2}|\Na v|_{\varphi}^2\,\oo_\varphi^n\no\\
&&+\int_M\;\frac {\al\la}{\al-1} v^{p} \, \oo_{\varphi}^n.  \label{eq:v8a}
\eeqn
Taking these inequalities in (\ref{eq:v4}), we obtain that
\beqn &&
\frac {3(p-1)}4\int_M\; v^{p-2}|\Na_{\varphi} v|_{\varphi}^2\;\oo_{\varphi}^n\no\\&\leq& \int_M\; \Big(-A v^{p+\frac{1}{n-1}}
+\td f v^p\Big)\oo_{\varphi}^n+\frac {p-\al}{(\al-1)^2}\int_M\; e^B v^{p-1}|\Na B|^2\,\oo_g^n\no\\&&+\int_M\;\frac {\al\la}{\al-1} v^{p} \, \oo_{\varphi}^n+\int_M\;e^{-\alpha(F+\lambda\varphi)}R_gv^{p-1} \,\oo_{\varphi}^n.\label{eq:v9}
\eeqn
We rewrite the inequality (\ref{eq:v9}) as
\beqs &&
\frac {3(p-1)}{4}\int_M\;  v^{p-2}|\Na_{\varphi} v|_{\varphi}^2\;\oo_{\varphi}^n+\int_M\; A v^{p+\frac{1}{n-1}}
\,\oo_{\varphi}^n\\&\leq& \int_M\; \Big(\td f v^p+\frac {\al\la}{\al-1} v^{p}+e^{-\alpha(F+\lambda\varphi)}R_gv^{p-1} \Big)\oo_{\varphi}^n\\
&&+\frac {p-\al}{(\al-1)^2}\int_M\; e^B v^{p-1}|\Na B|^2\,\oo_g^n.
\eeqs
Thus, we can choose $\la$ as in (\ref{eq:A22}) such that
\beqn &&
\frac {3(p-1)}{p^2}\int_M\;  |\Na_{\varphi} v^{\frac p2}|_{\varphi}^2\;\oo_{\varphi}^n+\frac {\la \al}4\int_M\; e^{\frac 1{n-1}\al (F+\la \varphi)-\frac F{n-1}} v^{p+\frac{1}{n-1}}
\,\oo_{\varphi}^n \nonumber\\&\leq& \int_M\; \Big(\td f v^p+\frac {\al\la}{\al-1} v^{p}+e^{-\alpha(F+\lambda\varphi)}R_gv^{p-1} \Big)\oo_{\varphi}^n\no\\&&+\frac {p-\al}{(\al-1)^2}\int_M\; e^B v^{p-1}|\Na B|^2\,\oo_g^n\no\\
&\leq& \int_M\; \Big(\td f+\frac {\al\la}{\al-1}+\frac 1n e^{-\frac Fn}R_g \Big)v^{p}\oo_{\varphi}^n+\frac {p-\al}{(\al-1)^2}\int_M\; e^B v^{p-1}|\Na B|^2\,\oo_g^n,\label{eq:v11}
\eeqn where we used
\beq
e^{-\al(F+\la \varphi)}=\frac v{n+\Delta_g \varphi}\leq\frac 1n e^{-\frac Fn}v. \label{eq:v10}
\eeq
Note that
\beq
|\Na B|^2\leq 2(1-\al)^2|\Na F|^2+2\al^2\la^2|\Na \varphi|^2.\label{eq:v10 B}
\eeq
Thus, taking (\ref{eq:v10 B}) in (\ref{eq:v11}), we obtain
\beqn &&
\frac {3(p-1)}{p^2}\int_M\;  |\Na_{\varphi} v^{\frac p2}|_{\varphi}^2\;\oo_{\varphi}^n+\frac {\la \al}4\int_M\; e^{\frac 1{n-1}\al (F+\la \varphi)-\frac F{n-1}} v^{p+\frac{1}{n-1}}
\,\oo_{\varphi}^n\no\\&\leq& \int_M\; \Big(\td f+\frac {\al\la}{\al-1}+\frac 1n e^{-\frac Fn}R_g \Big)v^{p}\oo_{\varphi}^n\no\\
&&+2(p-\al)\int_M\; e^B v^{p-1}|\Na F|^2\,\oo_g^n+\frac {2\al^2\la^2(p-\al)}{(\al-1)^2}\int_M\; e^B v^{p-1}|\Na \varphi|^2\,\oo_g^n\no\\
&\leq&\int_M\; \Big(\td f+\frac {\al\la}{\al-1}+\frac 1n e^{-\frac Fn}R_g \Big)v^{p}\oo_{\varphi}^n\no\\
&&+2(p-\al)\int_M\;   v^{p}|\Na F|_\varphi^2\,\oo_\varphi^n+\frac {2\al^2\la^2(p-\al)}{(\al-1)^2}\int_M\; e^B v^{p-1}|\Na \varphi|^2\,\oo_g^n
\eeqn
The lemma is proved.
\end{proof}

Combining Lemma \ref{lem:varphi}, Lemma \ref{lem:F}, Lemma \ref{lem:varphi2} with Lemma \ref{lem:v}, we show that $n+\Delta_g\varphi$ is bounded from above.

\begin{lem}\label{lem:key} If $f\in L^b(\oo_{\varphi})$ for some $b>n$, then \beq
n+\Delta_g\varphi\leq C(n, b, g, \|f\|_{L^b(\oo_{\varphi})}, \|F\|_{C^0}, \|\varphi\|_{C^0}, \max_M|\eta|_{\oo_g}).
\eeq

\end{lem}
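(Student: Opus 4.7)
The plan is to set up a Moser iteration on the integral inequality of Lemma \ref{lem:v}, but this time with the exponent $\al$ kept \emph{fixed} (for definiteness, $\al=2$) rather than growing with $p$ as in the proof of Lemma \ref{lem:varphi}. This choice is crucial: once $\al$ is independent of $p$, the auxiliary function $v = e^{-\al(F+\la\varphi)}(n+\Delta_g\varphi)$ is comparable to $\td v = n+\Delta_g\varphi$ up to a uniform multiplicative constant depending only on $\|F\|_{C^0}$ and $\|\varphi\|_{C^0}$, so any $L^\infty$ bound on $v$ transfers directly to the desired $L^\infty$ bound on $n+\Delta_g\varphi$. By contrast, the choice $\al=2p$ used in Lemma \ref{lem:varphi} produces an exponential factor $e^{C_p}$ with $C_p\to\infty$, which is precisely why that argument stopped at an $L^q$ estimate.

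First I would combine Lemma \ref{lem:v} with the Sobolev inequality of Lemma \ref{lem:sob} applied to $u = v^{p/2}$. The third term on the right of Lemma \ref{lem:v}, involving $|\Na\varphi|^2$, is controlled using Lemma \ref{lem:varphi2} together with the uniform upper bound on $e^B$ coming from the $C^0$ bounds on $F$ and $\varphi$. This yields, for any $\ga \in (1, n/(n-1))$ and any $p \geq 2$,
\beqs
\Big(\int_M v^{p\ga}\,\oo_\varphi^n\Big)^{1/\ga}
\leq Cp \int_M (|f|+1)\, v^p\,\oo_\varphi^n
+ Cp^2 \int_M v^p|\Na F|_\varphi^2\,\oo_\varphi^n
+ Cp^2 \int_M v^{p-1}\,\oo_\varphi^n,
\eeqs
with $C$ independent of $p$.

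The key step is then a careful H\"older pairing of each right-hand side term. For the $f$-term, pair $f\in L^b(\oo_\varphi)$ (with $b>n$) against $v^p$ in $L^{a_1}(\oo_\varphi)$ where $a_1 = b/(b-1) \in (1, n/(n-1))$. For the $|\Na F|_\varphi^2$-term, invoke Lemma \ref{lem:F} (applied with $2b_0=\min(b,2n-\epsilon)$ so that $b_0\in(n/2,n)$) to obtain $|\Na F|_\varphi^2 \in L^{\kappa}(\oo_\varphi)$ for some $\kappa > n$, and pair against $v^p$ in $L^{a_2}(\oo_\varphi)$ where $a_2 = \kappa/(\kappa-1) \in (1, n/(n-1))$. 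The third term is dominated by $C(\|v\|_{L^{pa_1}}^p + 1)$ via Young's inequality. Choosing $\ga\in(\max(a_1,a_2), n/(n-1))$ — possible precisely because both $b>n$ and $\kappa>n$ — one arrives at the iterable inequality
\beqs
\|v\|_{L^{p\ga}(\oo_\varphi)} \leq (Cp^2)^{1/p} \bigl(\|v\|_{L^{pa}(\oo_\varphi)}+1\bigr),
\qquad a = \max(a_1, a_2) < \ga.
\eeqs

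A standard Moser iteration with $p_k = (\ga/a)^k p_0$ and $p_0$ chosen sufficiently large then produces a uniform $L^\infty$ bound on $v$, since $\sum_k p_k^{-1}\log p_k$ converges and the starting term $\|v\|_{L^{p_0 a}(\oo_\varphi)}$ is finite by Lemma \ref{lem:varphi}. Translating the bound back to $n+\Delta_g\varphi$ costs only a uniform constant, because $\al$ was fixed. The main obstacle is the simultaneous handling of the two H\"older pairings: one must find a single $\ga<n/(n-1)$ strictly dominating both $a_1$ and $a_2$, and this is exactly where the hypothesis $b>n$ and the conclusion $\kappa>n$ of Lemma \ref{lem:F} converge to make the iteration work.
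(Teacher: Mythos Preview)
Your proposal is correct and follows essentially the same strategy as the paper's proof: fix $\al=2$ in Lemma \ref{lem:v}, control the $|\Na\varphi|^2$-term via Lemma \ref{lem:varphi2}, apply the Sobolev inequality of Lemma \ref{lem:sob}, H\"older-pair both the $f$-term (exponent $b>n$) and the $|\Na F|_\varphi^2$-term (exponent $\kappa>n$ from Lemma \ref{lem:F}), and run Moser iteration in the gap $\max\{b/(b-1),\kappa/(\kappa-1)\}<\ga<n/(n-1)$. The only cosmetic difference is that the paper interpolates each H\"older norm back down to $\|z\|_{L^1}$ and absorbs a small $\|z\|_{L^{2\ga}}$-term before iterating (giving a jump $L^{p/2}\to L^{p\ga}$), whereas you iterate directly from $L^{pa}$ to $L^{p\ga}$; both are equivalent Moser schemes.
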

\begin{proof}Let $p\geq 2.$ Since $n+\Delta_g \varphi\geq n e^{\frac Fn}$, we compute
\beq
v^{p-1}=e^{\alpha(F+\lambda\varphi)}\frac {v^p}{n+\Delta_g\varphi}\leq \frac 1n e^{\alpha(F+\lambda\varphi)-\frac Fn}\,v^p.\label{eq:B08}
\eeq
Taking $z=v^{\frac p2}$ and $\al=2$ in  the inequality (\ref{eq:B01}), we see that
\beqn  &&
\frac {3(p-1)}{p^2}\int_M\;  |\Na z|_{\varphi}^2\;\oo_{\varphi}^n\leq \int_M\; \Big(\td f+2\la+\frac 1n e^{-\frac Fn}R_g\Big)z^2\oo_{\varphi}^n\no\\
&&+2p\int_M\;   z^{2}|\Na F|_\varphi^2\,\oo_\varphi^n+8\la^2p\int_M\; e^B v^{p-1}|\Na \varphi|^2\,\oo_g^n\no\\
&\leq&\int_M\; \Big(\td f+2\la+\frac 1n e^{-\frac Fn}R_g\Big)z^2\oo_{\varphi}^n\no\\
&&+2p\int_M\;   z^{2}|\Na F|_\varphi^2\,\oo_\varphi^n+C(n, g, \|F\|_{C^0}, \|\varphi\|_{C^0})p\int_M\;  v^p\,\oo_\varphi^n,
\eeqn where we used (\ref{eq:B08}) and Lemma \ref{lem:varphi2} in the last inequality. Thus, we obtain that
\beq
\frac {3(p-1)}{p^2}\int_M\;  |\Na z|_{\varphi}^2\;\oo_{\varphi}^n\leq p\int_M\; Gz^2\,\oo_{\varphi}^n+
2p\int_M\;   z^{2}|\Na F|_\varphi^2\,\oo_\varphi^n
\eeq where
\beq
G=\td f+2\la+\frac 1n e^{-\frac Fn}R_g+C(g, \|F\|_{C^0}, \|\varphi\|_{C^0}).
\eeq
Lemma \ref{lem:sob} infers that for any $\ga\in (1, \frac n{n-1})$
\beqn &&
\Big(\int_M\; z^{2\ga}\,\oo_{\varphi}^n\Big)^{\frac 1{\ga}}\no\\&\leq&C(n, \ga,  g, \|F\|_{C^0})\int_M\; (|\Na z|_{\varphi}^2+z^2)\,\oo_{\varphi}^n \no\\
&\leq&\frac {C(n, \ga, g, \|F\|_{C^0})p^3}{p-1}\int_M\; Gz^2\oo_{\varphi}^n+\frac {C(n, \ga, g, \|F\|_{C^0})p^3}{p-1}\int_M\;  z^{2}|\Na F|_\varphi^2\,\oo_\varphi^n.
\label{eq:B02}
\eeqn
By the assumption $b>n$, we have $s=\frac {b}{b-1}\in (1, \frac n{n-1})$. We choose $\ga$ such that $\ga\in (s, \frac n{n-1}).$
By the H\"older inequality and Lemma \ref{lem:inter}, we have
\beqn
\int_M\; Gz^2\oo_{\varphi}^n&\leq& \|G\|_{L^b(\oo_{\varphi})}\|z\|^2_{L^{2s}(\oo_{\varphi})}\no\\
&\leq&\|G\|_{L^b(\oo_{\varphi})}\Big(2\ee'^2 \|z\|^2_{L^{2\ga}(\oo_{\varphi})}+C(n, b, \ga,  \ee')\|z\|_{L^1(\oo_{\varphi})}^2 \Big). \label{eq:B03}
\eeqn      Similarly, we have
\beqn
\int_M\;  z^{2}|\Na F|_\varphi^2\,\oo_\varphi^n&\leq&\|\,|\Na F|_\varphi^2\,\|_{L^{\kappa}(\oo_{\varphi})}\Big(2\ee'^2 \|z\|^2_{L^{2\ga}(\oo_{\varphi})}+C(n, \kappa, \ga, \ee')\|z\|_{L^1(\oo_{\varphi})}^2 \Big).\label{eq:B04}
\eeqn
Here we assume $\kappa>n$ and we get $\frac {\kappa}{\kappa-1}\in (1, \frac n{n-1})$.  We choose $\ga$ satisfying
\beq
\max\Big\{\frac {\kappa}{\kappa-1}, \frac b{b-1}\Big\}<\ga<\frac n{n-1}.
\eeq
Inserting (\ref{eq:B03}), (\ref{eq:B04}) in (\ref{eq:B02}), we arrive at
\beqn
\|z\|_{L^{2\ga(\oo_{\varphi})}}^2&\leq& C(n, b, g, \|G\|_{L^b(\oo_{\varphi})},  \|\,|\Na F|_\varphi^2\,\|_{L^\kappa(\oo_{\varphi})},  \|F\|_{C^0} )p^3\no\\
&&\cdot\Big(\ee'^2\|z\|_{L^{2\ga}(\oo_{\varphi})}^2+C(n, b, \kappa, \ee')\|z\|_{L^1(\oo_{\varphi})}^2\Big).
\eeqn Thus,  we can choose $\ee'$ small such that
\beq
\|z\|_{L^{2\ga(\oo_{\varphi})}}^2\leq C(n, b, \kappa, g, \|G\|_{L^b(\oo_{\varphi})},  \|\,|\Na F|_\varphi^2\,\|_{L^\kappa(\oo_{\varphi})},  \|F\|_{C^0} ) p^N\|z\|_{L^1(\oo_{\varphi})}^2.
\eeq for some $N>0$ independent of $p.$ Thus, we conclude that
\beqs
\|v\|_{L^{p\ga}(\oo_{\varphi})}&=&\|z\|_{L^{2\ga}(\oo_{\varphi})}^{\frac 2p}\leq C(n, b, \kappa, g, \|G\|_{L^b(\oo_{\varphi})},  \|\,|\Na F|_\varphi^2\,\|_{L^\kappa(\oo_{\varphi})},  \|F\|_{C^0} )^{\frac 2p}p^{\frac Np}\|z\|_{L^1(\oo_{\varphi})}^{\frac 2p}\\&=&C(n, b, \kappa, g, \|G\|_{L^b(\oo_{\varphi})},  \|\,|\Na F|_\varphi^2\,\|_{L^\kappa(\oo_{\varphi})},  \|F\|_{C^0} )^{\frac 2p}p^{\frac Np}\|v\|_{L^{\frac p2}(\oo_{\varphi})}.
\eeqs
Thanks to Lemma \ref{lem:F} and Lemma \ref{lem:varphi}, $\|\,|\Na F|_\varphi^2\,\|_{L^\kappa(\oo_{\varphi})}$ is bounded for some $\kappa>n$. Thus, the standard Moser iteration applies and (\ref{eq:B23}) we achieve that $\|v\|_{\infty}$ is bounded, that proves the lemma.
\end{proof}

\section{Proof of main theorems}

\subsection{Proof of Theorem \ref{theo:main1a}}

According to Lu-Seyyedali \cite[Theorem 1.2]{[LS]}, we have that $\|F\|_{C^0}$ and $\|\varphi\|_{C^0}$ are bounded. Then Lemma \ref{lem:key} gives us the bound of $\max_M(n+\Delta_g {\varphi})$. Since $\|F\|_{C^0}$ is bounded, there exists a constant $C>0$ such that
\beq
\frac 1C\oo_g\leq \oo_{\varphi}\leq C\oo_g.
\eeq As seen from the proof of Chen-Cheng \cite[Proposition 4.2]{[CC1]}, $\|F\|_{W^{2, q}(\oo_g)}$ and $\|\varphi\|_{W^{4, q}(\oo_g)}$ are bounded  for any finite $q$.
The theorem is proved.

\subsection{Proof of Theorem \ref{theo:main1}}
In order to prove Theorem \ref{theo:main1}, we reminisce some notations. The readers are referred to Chen-Cheng \cite{[CC2]} for details.  Let $(M, \oo_g)$ be a compact K\"ahler manifold of complex dimensional $n$. We define the space of K\"ahler potentials
\beqn
\cH&=&\{\varphi\in C^{\infty}(M, \RR)\;|\; \oo_{\varphi}=\oo_g+\pbp\varphi>0\}, \label{eq:F01}\\
\cH_0&=&\{\varphi\in \cH\;|\;I_{\oo_g}(\varphi)=0\},\label{eq:F02}
\eeqn where the functional $I_{\oo_g}(\varphi)$ is defined by
\beq
I_{\oo_g}(\varphi)=\frac 1{(n+1)!}\int_M\; \varphi\sum_{k=0}^n\,\oo_g^k\wedge
\oo_{\varphi}^{n-k}.
\eeq
It is clear that for any path $\varphi(t)\in \cH$, we have
\beq
\frac d{dt}I_{\oo_g}(\varphi(t))=\frac 1{n!}\int_M\; \pd {\varphi(t)}t\oo_{\varphi(t)}^n. \label{eq:I}
\eeq
The $K$-energy is defined by
\beq
\cK(\varphi)=-\int_0^1\,\int_M\;\pd {\varphi_t}t(R(\oo_{\varphi_t})-\un R)\,\frac {\oo_{\varphi_t}^n}{n!}.
\eeq Note that along the Calabi flow we have
\beq
\frac d{dt}\cK(\varphi(t))=-\int_M\;(R(\oo_{\varphi(t)})-\un R)^2\,\frac {\oo_{\varphi_t}^n}{n!}\leq 0.
\eeq Therefore, the $K$-energy is non-increasing along the Calabi flow.
It is known that the $K$-energy can be written as
\beq
\cK(\varphi)=\int_M\; \log \frac {\oo_{\varphi}^n}{\oo_g^n}\,\frac {\oo_{\varphi}^n}{n!}+J_{-Ric(\oo_g)}(\varphi),
\eeq where for a $(1, 1)$ form $\chi$, we define
 $$J_{\chi}(\varphi)=\int_0^1\,\int_M\;\pd {\varphi_t}t\Big(\chi\wedge \frac {\oo_{\varphi_t}^{n-1}}{(n-1)!}-\un \chi \frac {\oo_{\varphi_t}^n}{n!}\Big)\oo_{\varphi_t}^n\wedge dt,$$ where $\varphi_t\in \cH$ is a path connecting $0$ and $\varphi.$ Here
\beq
\un \chi=\frac {\int_M\;\chi\wedge \frac {\oo_g^{n-1}}{(n-1)!}}{\int_M\;\frac {\oo_g^n}{n!}}.
\eeq

\begin{proof}[Proof of Theorem \ref{theo:main1}]
First, since the scalar curvature equation can be written as
\beqs
(\oo_g+\pbp \varphi)^n&=&e^F\oo_g^n,\\
\Delta_{\varphi}F&=&-R(\oo_{\varphi(t)})+\tr_{\varphi}Ric(g),\quad \forall\; t\in [0, T).
\eeqs
By the assumption, $\|R(\oo_{\varphi(t)})\|_{L^p(\oo_{\varphi})}$ is uniformly bounded for $t\in [0, T)$. As a result of Theorem \ref{theo:main1a}, once one can show that there exists a constant $C$ depending on $T$ such that
\beq
\int_M\; F\,\oo_{\varphi}^n\leq C(T), \quad \forall\; t\in [0, T),\label{eq:F}
\eeq then $\|\varphi\|_{W^{4, q}(\oo_g)}$ is bounded for any finite $q$, and $\|\varphi\|_{C^{3, \al}(M)}$ is also bounded for any $\al\in (0, 1)$. Therefore, the short time existence of Chen-He \cite{[ChenHe1]} applies, and the Calabi flow can be extended past time $T$.

Next, we show that $d_2(\varphi(0), \varphi(t))$ is uniformly bounded for $t\in [0, T)$. Let $\psi(t)=\varphi(\frac T2+t).$ Then $\psi(t)(t\in [0, \frac T2))$ is a solution of Calabi flow. According to Calabi-Chen \cite[Theorem 1.5]{[Cal3]}, the distance $d_2(\varphi(t), \psi(t))$ is non-increasing for $t\in [0, \frac T2)$. Thus, we have
\beqn d_2(\varphi(t), \varphi(\frac T2+t))=
d_2(\varphi(t), \psi(t))\leq d_2(\varphi(0), \psi(0))=d_2(\varphi(0), \varphi(\frac T2)),\quad \forall\,t\in [0, \frac T2).
\eeqn
This implies that for any $t\in [\frac T2, T)$,
\beqn
d_2(\varphi(0), \varphi(t))&\leq& d_2(\varphi(0), \varphi(t-\frac T2))+d_2(\varphi(t-\frac T2), \varphi(t))\no\\
&\leq&\max_{s\in [0, \frac T2]}d_2(\varphi(0), \varphi(s))+d_2(\varphi(0), \varphi(\frac T2)).\label{eq:B12}
\eeqn
Thus, $d_2(\varphi(0), \varphi(t))$ is uniformly bounded for $t\in [0, T)$.

Then, we show that $d_1(\varphi(0), \varphi(t))$ is uniformly bounded for $t\in [0, T)$. In fact, for any two smooth K\"ahler potentials $\phi_0, \phi_1$ and any smooth path $\phi_s(s\in [0, 1])$ connecting $\phi_0$ and $\phi_1$, it holds
\beq
L_1(\phi_0, \phi_1):=\int_0^1\,\|\partial_s\phi_s\|_{L^1(\oo_{\phi_s})}\,ds\leq \vol(\oo_{g})^{\frac 12}\int_0^1\,\|\partial_s\phi_s\|_{L^2(\oo_{\phi_s})}\,ds:=L_2(\phi_0, \phi_1).
\eeq
Taking the infimum with respect to all smooth path connecting $\phi_0$ and $\phi_1$,  we have
\beq
d_1(\phi_0, \phi_1)\leq \vol(\oo_{g})^{\frac 12} d_2(\phi_0, \phi_1). \label{eq:B13}
\eeq Combining (\ref{eq:B12}) with (\ref{eq:B13}), we obtain that  $d_1(\varphi(0), \varphi(t))$ is uniformly bounded along the Calabi flow for $t\in [0, T)$.

Finally, we are ready to verify (\ref{eq:F}).
We assume that $\varphi(0)\in \cH_0$. By (\ref{eq:I}) we have
\beq
\frac d{dt}I_{\oo_g}(\varphi(t))=\frac 1{n!}\int_M\; \pd {\varphi(t)}t\,\oo_{\varphi(t)}^n=\frac 1{n!}\int_M\; (R(\oo_{\varphi(t)})-\un R)\,\oo_{\varphi(t)}^n=0.
\eeq
Thus, the solution of Calabi flow satisfies $\varphi(t)\in \cH_0$ for any $t\in [0, T).$ Since $d_1(\varphi(0), \varphi(t))$ is uniformly bounded and $\varphi(t)\in \cH_0$  for $t\in [0, T)$, Chen-Cheng \cite[Lemma 4.4]{[CC2]} we have $|J_{-Ric_g}(\varphi(t))|$ is uniformly bounded for $t\in [0, T)$.
 Since the $K$-energy $\cK(\varphi(t))$ is decreasing along the Calabi flow, we have
\beq
\int_M F\,\oo_{\varphi}^n= \cK(\varphi(t))-J_{-Ric_g}(\varphi(t))\leq K(\varphi(0))+ |J_{-Ric_g}(\varphi(t))|\leq C(T).
\eeq
Here the constant $C$ depends on $T$ since $d_1(\varphi(0), \varphi(t))$ is bounded by a constant depending on $T$ by (\ref{eq:B12}).
Therefore, the theorem is proved.
\end{proof}

\subsection{Proof of Theorem \ref{theo:main2}}

First, we recall some preliminary results on extremal K\"ahler metrics. The readers are referred to He \cite{[He6]} for more details.
Futaki-Mabuchi \cite{[FM]} showed that there exists an extremal vector field determined by $(M, [\oo_g])$, regardless of whether an extremal K\"ahler metric exists or not. Moreover, such a real holomorphic vector field is unique up to the conjugate action of $\Aut_0(M).$ Here $\Aut_0(M)$ denotes the identity component of automorphism group $\Aut(M)$.   We denote by $\Aut_0(M, V)$ the subgroup of $\Aut_0(M)$ which commutes with the flow of $V$, and we write $G=\Aut_0(M, V)$ for simplicity.

Given the (real) extremal vector field $V$, the corresponding holomorphic vector field $X=V-\sqrt{-1}JV$ is called the  (complex) extremal vector field. Assume that $\oo_g$ is invariant under the action of $JV$, which means that $L_{JV}\oo_g=0$. Then $X$ has a real potential function $\te_X$ such that $X=g^{j\bar k}\pd {\te_X}{\bar z_k}\pd {}{z_j}$ and we normalize $\te_X$ such that $\int_M\;\te_X\,\oo_g^n=0.$ We define $\cH_X$ the space of K\"ahler potentials which is invariant under the action of $JV$:
\beq
\cH_X=\{\varphi\in \cH\;|\; L_{JV}\varphi=0\},\quad \cH_X^0=\cH_X\cap \cH_0,
\eeq where $\cH$ and $\cH_0$ are defined in (\ref{eq:F01}) and (\ref{eq:F02}) respectively.
For any $\varphi\in \cH_X$, the corresponding holomorphic potential with respect to the metric $\oo_{\varphi}$ is given by
\beq
\te_X(\varphi)=\te_X+V(\varphi).
\eeq It is known by Zhu \cite{[Zhu]} that $|\te_X(\varphi)|$ is uniformly bounded for any $\varphi\in\cH_X.$
For any $\varphi\in \cH_X$, the modified $K$-energy is defined by
\beq
\cK_X(\varphi)=-\int_0^1\,dt\int_M\; \pd {\varphi_t}t(R(\varphi_t)-\un R-\te_X(\varphi_t))\,\oo_{\varphi_t}^n,
\eeq where $\varphi_t\in \cH_X$ is a path connecting $0$ and $\varphi.$ The modified $K$-energy can be written as
\beq
\cK_X(\varphi)=\int_M\; \log\frac {\oo_{\varphi}^n}{\oo_g^n}\,\oo_{\varphi}^n+J_{-Ric_g}(\varphi)+J^X(\varphi), \label{eq:K}
\eeq where $J^X$ is defined by
\beq
J^X(\varphi)=\int_0^1\,dt\int_M\; \pd {\varphi_t}t\,\te_X(\varphi_t)\,\oo_{\varphi_t}^n.
\eeq Here $\varphi_t\in \cH_X$ is a path connecting $0$ and $\varphi.$

To state the properness result of W. Y. He, we introduce the distance function $d_{1, G}.$
Recall that Darvas \cite{[D]} introduced the $L^1$ length on $T_{\varphi}\cH:$
\beq
\|\xi\|_{1, \varphi}=\int_M\; |\xi|\,\frac {\oo_{\varphi}^n}{n!},\quad \forall\; \xi\in T_{\varphi}\cH=C^{\infty}(M).
\eeq
The distance function $d_1(\varphi_0, \varphi_1)$ is the infimum of the length of the curves in $\cH$ connecting $\varphi_0$ and $\varphi_1.$ The $d_{1}$ distance relative to the action of $G$ is defined by
\beq
d_{1, G}(\varphi, \psi)=\inf_{\si\in G}d_1(\varphi, \si[\psi]),
\eeq where $\si[\psi]\in \cH_0$ denotes the K\"ahler potential of $\si^*\oo_{\psi}.$

The properness of the $K$-energy on the K\"ahler-Einstein manifold is proved by Darvas-Rubinstein in \cite{[DR]}, and on the K\"ahler manifold which admits the constant scalar curvature K\"ahler metrics by Berman-Darvas-Lu in \cite{[BDL2]}. Here we reformulate the properness theorem proved by He \cite{[He6]} in the extremal K\"ahler metric case.
\begin{theo}\label{theo:He}(cf. \cite[Theorem 3.1]{[He6]}) Suppose that $(M, [\oo_g])$ admits an extremal K\"ahler metric with extremal vector field $V$. Then the modified $K$-energy $\cK_X$ is $d_{1, G}$-proper, i.e. the following properness conditions hold:\begin{enumerate}
       \item[(1).] There exist two constants $C>0$ and $D$ such that
\beq \cK_X(\varphi)\geq C d_{1, G}(0, \varphi)-D,\quad \forall\;\varphi\in \cH_X^0.\label{eq:F03}\eeq
       \item[(2).] $\cK_X$ is bounded below on $\cH_X$.
     \end{enumerate}

\end{theo}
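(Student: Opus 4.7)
The statement is a reformulation of Theorem 3.1 of He \cite{[He6]}, which extends the Berman--Darvas--Lu theorem on $d_1$-properness of the $K$-energy at a cscK metric to the extremal setting. My plan is to follow that strategy in three steps: weak-geodesic convexity of $\cK_X$ on $\cH_X$; boundedness below via the extremal metric as a minimizer; and a contradiction argument using compactness of $d_{1,G}$-normalized geodesic rays.

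First, using the decomposition \eqref{eq:K}, convexity is established term by term: the entropy $\int \log(\oo_\varphi^n/\oo_g^n)\,\oo_\varphi^n$ is convex along $C^{1,\bar 1}$-geodesics by Berman--Berndtsson; the term $J_{-Ric(\oo_g)}$ is convex by a direct computation on the geodesic equation; and the new piece $J^X(\varphi)$ is \emph{affine} along any Mabuchi geodesic that stays inside $\cH_X$, because $\te_X(\varphi_t)=\te_X+V(\varphi_t)$ is affine in $t$ along such a geodesic, as a consequence of the commutation between the Mabuchi geodesic flow and the holomorphic flow of $V$ combined with the $JV$-invariance of the endpoints. Hence $\cK_X$ is convex along $d_1$-geodesics in $\cH_X$. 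Since an extremal metric $\oo_{\varphi_{\text{ext}}}$ is assumed to exist and is a critical point of $\cK_X$ (its Euler--Lagrange equation on $\cH_X$ is exactly $R(\oo_\varphi)-\un R=\te_X(\varphi)$), convexity forces $\varphi_{\text{ext}}$ to be a global minimizer, which proves (2).

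For (1), argue by contradiction: if it fails, there is a sequence $\varphi_i\in \cH_X^0$ with $d_{1,G}(0,\varphi_i)\to\infty$ while $\cK_X(\varphi_i)/d_{1,G}(0,\varphi_i)\to 0$. By replacing each $\varphi_i$ with $\si_i[\varphi_i]$ for some $\si_i\in G$ nearly realizing the infimum in the definition of $d_{1,G}$, and exploiting the $G$-invariance of $\cK_X$, one may assume $d_1(0,\varphi_i)=(1+o(1))d_{1,G}(0,\varphi_i)$. Parametrize the finite-energy weak geodesic from $0$ to $\varphi_i$ by unit $d_1$-speed and invoke Darvas' compactness theorem for normalized finite-energy rays to extract a subsequential limit geodesic ray $\ell:[0,\infty)\to\cH$ emanating from $0$ with $d_1(0,\ell(t))=t$. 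Convexity of $\cK_X$ along $\ell$ combined with the sublinear growth hypothesis on $\cK_X(\varphi_i)$ forces the asymptotic slope $\lim_{t\to\infty}\cK_X(\ell(t))/t\leq 0$.

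The main obstacle is the ensuing rigidity step: the existence of an extremal metric must force any such $d_{1,G}$-normalized ray of non-positive asymptotic $\cK_X$-slope to be generated by a one-parameter subgroup of $G$, which contradicts the $d_{1,G}$-normalization of the $\varphi_i$. This extremal analogue of the Berman--Darvas--Lu/Darvas--Rubinstein rigidity relies on strict convexity of the entropy outside $G$-orbits of the extremal metric, uniqueness modulo $G$ of extremal metrics in $[\oo_g]$, and careful handling of the non-compact reductive automorphism group $G=\Aut_0(M,V)$. This is precisely what is carried out by He in \cite{[He6]}, from which the present reformulation in terms of $d_{1,G}$-properness follows by specializing notation.
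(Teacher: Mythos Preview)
The paper does not prove this theorem at all: it is stated purely as a citation of \cite[Theorem 3.1]{[He6]} (``Here we reformulate the properness theorem proved by He \cite{[He6]} in the extremal K\"ahler metric case'') and then used as a black box in the proof of Theorem \ref{theo:main2}. So there is no paper's own proof to compare your proposal against.

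That said, your sketch is a reasonable outline of the Berman--Darvas--Lu/Darvas--Rubinstein machinery as adapted by He, and you correctly identify the rigidity step for the limiting ray as the crux. One caution on your convexity discussion: the assertion that $J^X$ is affine along $C^{1,\bar 1}$-geodesics in $\cH_X$ is more delicate than you indicate, since such geodesics are in general only $C^{1,\bar 1}$ and the argument you give (commutation of the geodesic flow with the flow of $V$) needs the regularity of the geodesic to make sense of $V(\varphi_t)$ and its $t$-derivative; He handles this via approximation and the structure of $\te_X(\varphi)$. If you intend this as a self-contained proof rather than a pointer to \cite{[He6]}, that point would need to be filled in. For the purposes of the present paper, simply citing \cite[Theorem 3.1]{[He6]} as the authors do is entirely adequate.
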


\begin{proof}[Proof of Theorem \ref{theo:main2}] The proof consists of the following steps:

\emph{Step 1: Long time existence.} Due to Theorem \ref{theo:main1}, the Calabi flow exists for all time, under the assumption that the Calabi flow with the initial metric $\oo_{\varphi_0}$ has uniformly bounded $L^p$ scalar curvature. We denote by $\varphi(t)(t\in [0, \infty))$ the solution of Calabi flow with $\varphi(0)=\varphi_0\in \cH_X$.
Let $\{\si_t\}_{t\in [0, \infty)}$ with $\si_0=id$ be the one-parameter  family of automorphisms of $M$ generated by $V$. After normalization, the K\"ahler potential $\psi(t)(t\in [0, \infty))$ defined by $\oo_{\psi(t)}:=\si_t^*\oo_{\varphi(t)}$ satisfies the modified Calabi flow equation
\beq
\pd {\psi(t)}t=R(\psi(t))-\un R-\te_X(\psi(t)),\quad \psi(0)=\varphi_0. \label{eq:C06}
\eeq Moreover, we have $\psi(t)\in \cH_X.$

\emph{Step 2: A priori estimates.} 
Note that the modified $K$-energy is non-increasing along the modified Calabi flow, we know that the modified $K$-energy is uniformly bounded from above along the flow.
By (\ref{eq:F03}), we have
\beq
d_{1, G}(0, \psi(t))\leq C, \quad \forall\; t>0,
\eeq where $C$ is a constant independent of $t$.
Thus, by the definition of $d_{1, G}$ for each $t>0$ we can find $\rho_t\in G$ and $\td \psi(t)\in \cH^0_X$ such that
\beq
\oo_{\td \psi(t)}=\rho_t^*\oo_{\psi(t)},\quad \sup_{t\in [0, \infty)}d_1(0, \td \psi(t))\leq C
\eeq for some $C>0. $ By Chen-Cheng \cite[Lemma 4.4]{[CC1]}, the functional $J_{-Ric_g}(\td \psi(t))$ is uniformly bounded
\beq
|J_{-Ric_g}(\td \psi(t))|\leq Cd_1(0, \td \psi(t))\leq C,\quad \forall\; t>0 \label{eq:C03}
\eeq
 and by He \cite[Proposition 2.2]{[He6]}, $J^X(\td \psi(t))$ is also uniformly bounded
\beq
|J^X(\td\psi(t))-J^X(0)|\leq Cd_{1}(\td \psi(t), 0)\leq C,\quad \forall\; t>0.  \label{eq:C04}
\eeq Here $C$'s  are some constants independent of $t$.
Since the modified $K$-energy is invariant under the action of $G$, we have $\cK(\td \psi(t))=\cK(\psi(t))\leq C$ for all $t>0.$ Combining this with (\ref{eq:C03}), (\ref{eq:C04}) and (\ref{eq:K}), we obtain
\beq
\int_M\; \log \frac {\oo_{\td \psi(t)}^n}{\oo_g^n}\,\oo_{\td \psi(t)}^n\leq C, \quad \forall\; t>0. \label{eq:C05}
\eeq
Consider the equations of $(\td \psi(t), \td F)$
\beq
 (\oo_g+\pbp \td \psi)^n=e^{\td F} \oo_g^n,\quad
 \Delta_{\td \psi}\td F=-R(\oo_{\td \psi})+\tr_{\td \psi}Ric(\oo_g).\label{extremal tilde psi}
\eeq
Since $\oo_{\td \psi(t)}=\rho_t^*\oo_{\psi(t)}=\rho_t^*\si_t^*\oo_{\varphi(t)}$ has uniformly bounded $L^p$ scalar curvature for some $p>n$ and $\td F$ satisfies (\ref{eq:C05}), Theorem \ref{theo:main1a} implies that $\|\td \psi(t)\|_{C^{3, \al}(M)}$ is uniformly bounded by a constant independent of $t$.

\emph{Step 3: Smoothness.}
From $C^{3, \al}(M)$ norm (\ref{eq:D03}) of $\td \psi(t)$, we could take a convergent subsequence  $\td \psi(t_i)$ converges to $\td\psi_\infty$ under $C^{3,\al}$ norm.
Since the modified $K$-energy $\cK_X$ of $\td \psi(t_i)$ is the same to $\cK_X$ of $\psi(t_i)$ and $\cK_X$ has lower bound on $\mathcal H_X$, $\cK_X(\td \psi(t_i))$ converges to the minimum of $\cK_X$. So, $\td \psi_\infty$ is a $C^{3,\al}$ minimiser of $\cK_X$. Then we apply the method in Chen-Cheng \cite[Section 5]{[CC2]}, as shown in \cite[Theorem 3.6]{[He6]}, by running a continuity path coming out of $\td \psi_\infty$, obtaining its uniformly bounded $d_1$ distance and utilising the a priori estimate for the twisted extremal equation to prove that
$\td \psi_\infty$ is actually a smooth extremal K\"ahler metric with respect to the holomorphic vector field $X$. We denote it by $\psi_e\in \cH_X^0$ is the K\"ahler potential of the extremal K\"ahler metric $\oo_{extK}=\oo_g+\pbp  \td \psi_\infty$.

\emph{Step 4: Gauge fixing.}
From the estimates above, $d_2(\psi_e, \td \psi(t)) $ is also uniformly bounded. Then the gauge-fixing argument in \cite[Page 2081]{[LWZ]} applies.
Since the geodesic distance between two modified Calabi flows is non-increasing, $d_2(\psi_e, \psi(t))$ is bounded by $d_2(\psi_e, \psi(0))$ for any $t\geq 0$. While, the triangle inequality gives
\beqs
d_2(\psi_e, \psi(t))
&\geq&d_2(\psi_e, (\rho_t^{-1})^*[\psi_e])-d_2((\rho_t^{-1})^*[\psi_e], \psi(t))\\
&=&d_2(\psi_e, (\rho_t^{-1})^*[\psi_e])-d_2(\psi_e, \rho_t^*[\psi(t)])\\
&=&d_2(\psi_e, (\rho_t^{-1})^*[\psi_e])-d_2(\psi_e, \td \psi(t)).
\eeqs
  Thus, we have the bound on $\rho_t$
\beq
d_2(\psi_e, (\rho_t^{-1})^*[\psi_e])\leq d_2(\psi_e, \psi(0)) +d_2(\psi_e, \td \psi(t))\leq C,\no
\eeq where $C$ is independent of $t$. Since the Lie group $G$ is finite-dimensional and all norms are equivalent on a finite-dimensional vector space. The aforementioned $d_2$ bound of $\rho_t$ implies
\beq \|\psi(t)\|_{C^{3, \al}(M)}=\|(\rho_t^{-1})^*\td \psi(t)\|_{C^{3, \al}(M)}\leq C,\label{eq:D03}\eeq
where $C$ is independent of $t$.

\emph{Step 5: Convergence.}  From $C^{3, \al}(M)$ norm (\ref{eq:D03}) of the modified Calabi flow, following the same argument of Chen-He \cite[Theorem 3.2]{[ChenHe1]}, we see that the K\"ahler potential $\psi(t)$ has uniform $C^k$ norm for each integer $k\geq 1$.
We can take the smooth convergence in fixed coordinate charts such that $\psi(t)\ri \psi_{\infty}$ in $C^k(M).$ Note that the modified Calabi energy
\beq
Ca(\psi(t))=\int_M\; (R(\oo_{\psi(t)})-\un R-\te_X(\psi(t)))^2\,\oo_{\psi(t)}^n \no
\eeq
is non-increasing along the modified Calabi flow (\ref{eq:C06}).  By the definition of the modified $K$-energy,
\beq
\int_0^{\infty}\,Ca(\psi(t))\,dt=\cK_X(\psi_0 )-\cK_X(\psi_{\infty})<+\infty,\no
\eeq
which implies that
\beq
Ca(\psi_{\infty})=\lim_{t\ri +\infty}Ca(\psi(t))=0.\no
\eeq
Thus,  $\oo_{\psi_{\infty}}$ is an extremal K\"ahler metric. By the argument of Huang-Zheng \cite{[HZ]}, $ \psi(t)$ converges exponentially fast to $\psi_{\infty}\in \cH_X$.  The theorem is proved.

\end{proof}


\vskip10pt
Haozhao Li, Institute of Geometry and Physics, and Key Laboratory of Wu Wen-Tsun
Mathematics, School of Mathematical Sciences, University of Science and Technology of China, No. 96 Jinzhai Road, Hefei, Anhui Province, 230026, China;  hzli@ustc.edu.cn.\\

Linwei Zhang, School of Mathematical Sciences, University of Science and Technology of China, No. 96 Jinzhai Road, Hefei, Anhui Province, 230026, China; zhanglinwei@mail.ustc.edu.cn.\\

Kai Zheng,  University of Chinese Academy of Sciences, Beijing 100190, P.R. China;\\
 KaiZheng@amss.ac.cn.

\end{document}